\newtheorem{theorem}{Theorem}[section]
\newtheorem{corollary}[theorem]{Corollary}
\newtheorem{proposition}[theorem]{Proposition}
\theoremstyle{definition}
\newtheorem{problem}[theorem]{Problem}
\newtheorem{definition}[theorem]{Definition}
\newtheorem{example}[theorem]{Example}
\newtheorem{remark}[theorem]{Remark}
\numberwithin{equation}{subsection}
\newtheorem*{ack}{Acknowledgement}
\newcommand{\Alex}{\operatorname{Alex}}
\newcommand{\Aut}{\operatorname{Aut}}
\newcommand{\Conj}{\operatorname{Conj}}
\newcommand{\Core}{\operatorname{Core}}
\newcommand{\C}{\operatorname{C}}
\newcommand{\Inn}{\operatorname{Inn}}
\newcommand{\Sa}{\operatorname{S}}
\newcommand{\Hom}{\operatorname{Hom}}
\newcommand{\id}{\mathrm{id}}
\newcommand{\Fix}{\mathrm{Fix}}
\begin{document}
\title{Free quandles and knot quandles are residually finite}

\author{Valeriy G. Bardakov}
\author{Mahender Singh}
\author{Manpreet Singh}

\address{Sobolev Institute of Mathematics, 4 Acad. Koptyug avenue, 630090, Novosibirsk, Russia.}
\address{Novosibirsk State  University, 2 Pirogova Street, 630090, Novosibirsk, Russia.}
\address{Novosibirsk State Agrarian University, Dobrolyubova street, 160, Novosibirsk, 630039, Russia.}
\email{bardakov@math.nsc.ru}

\address{Department of Mathematical Sciences, Indian Institute of Science Education and Research (IISER) Mohali, Sector 81,  S. A. S. Nagar, P. O. Manauli, Punjab 140306, India.}
\email{mahender@iisermohali.ac.in}
\email{manpreetsingh@iisermohali.ac.in}

\subjclass[2010]{Primary 57M25; Secondary 20E26, 57M05, 20N05}
\keywords{3-manifold group, enveloping group, free quandle, knot group, knot quandle, peripheral subgroup, residually finite quandle}

\begin{abstract}
In this note, residual finiteness of quandles is defined and investigated. It is proved that free quandles and knot quandles of tame knots are residually finite and Hopfian. Residual finiteness of quandles arising from residually finite groups (conjugation, core and Alexander quandles) is established. Further, residual finiteness of automorphism groups of some residually finite quandles is also discussed.
\end{abstract}

\maketitle

\section{Introduction}
In the recent years, quandles have been a subject of intensive investigation due to their appearance in various areas of mathematics. These objects first appeared in the work of Joyce \cite{Joyce} under the name  quandle, and that of Matveev \cite{Matveev} under the name distributive groupoid. A quandle is a set with a binary operation that satisfies three axioms modelled on the three Reidemeister moves of diagrams of knots in $\mathbb{S}^3$. Joyce and Matveev independently proved that each oriented diagram $D(K)$ of a tame knot $K$ (in fact, tame link) gives rise to a quandle $Q(K)$, called the knot quandle, which is independent of the diagram  $D(K)$. Further, they showed that if $K_1$ and $K_2$ are two tame knots with $Q(K_1) \cong Q(K_2)$, then there is a homeomorphism of $\mathbb{S}^3$ mapping $K_1$ onto $K_2$, not necessarily preserving the orientations. We refer the reader to the survey articles \cite{Carter, Kamada, Nelson} for more on the historical development of the subject and its relationships with other areas of mathematics.
\vspace*{.6mm}

Although the knot quandle is a strong invariant for tame knots, it is usually difficult to check whether two knot quandles are isomorphic. This motivates search for newer properties of quandles, in particular, of knot quandles. Over the years, various ideas have been transferred from other algebraic theories to that of quandles and their analogues (racks, shelves, etc).
\vspace*{.6mm}

The notion of residual finiteness (and other residual properties) of groups plays a crucial role in combinatorial group theory and low dimensional topology. In this note, we define and investigate residual finiteness of quandles. We begin by proving some closure properties of residual finiteness of quandles. We then investigate residual finiteness of conjugation, core, Alexander quandles of residually finite groups. Further, we discuss residual finiteness of automorphism groups of some residually finite quandles. Our first main result is that free quandles are residually finite, and finitely generated residually finite quandles are Hopfian. Our next main result is that the knot quandles of tame knots are residually finite. The key idea in its proof is the notion of finite separability of a subgroup of a group, and a result of Long and Niblo  \cite{Long-Niblo} on finite separability of  $\pi_1(X,p)$ in $\pi_1(M,p)$, where $M$ is an orientable irreducible compact 3-manifold and $X$ an incompressible connected subsurface of a component of the boundary $\partial(M)$ of $M$ containing the base point $p$. As a consequence, we obtain that knot quandles of tame knots are Hopfian.
\vspace*{.6mm}

\section{Preliminaries on quandles}\label{prelim}
We begin with the definition of the main object of our study, namely, a quandle.
\vspace*{.6mm}

\begin{definition}
A {\it quandle} is a non-empty set $X$ with a binary operation $(x,y) \mapsto x * y$ satisfying the following axioms:
\begin{enumerate}
\item $x*x=x$ for all $x \in X$;
\item For any $x,y \in X$ there exists a unique $z \in X$ such that $x=z*y$;
\item $(x*y)*z=(x*z) * (y*z)$ for all $x,y,z \in X$.
\end{enumerate}
\end{definition}
\vspace*{.6mm}

A non-empty set with a binary operation satisfying only the axioms (2) and (3) is called a {\it rack}. Obviously, every quandle is a rack, but not conversely.
\vspace*{.6mm}

\begin{example}
Although tame knots are rich sources of quandles, many interesting examples of quandles come from groups.
\begin{itemize}
\item If $G$ is a group, then the set $G$ equipped with the binary operation $a*b= b^{-1} a b$ gives a quandle structure on $G$, called the {\it conjugation quandle}, and denoted by $\Conj(G)$.
\item If $G$ is a group and we take the binary operation $a*b= b a^{-1} b$, then we get the {\it core quandle}, denoted as $\Core(G)$. In particular, if $G$ is additive abelian, then $\Core(G)$ is the Takasaki quandle of $G$.
\item Let $G$ be a group and $\varphi \in \Aut(G)$, then the set $G$ with binary operation $a*b=\varphi(ab^{-1})b$ gives a quandle structure on $G$, which is denoted by $\Alex(G, \varphi)$. These quandles are called as {\it generalized Alexander quandles}.
\end{itemize}
\end{example}
\vspace*{.6mm}

The quandle axioms are equivalent to saying that for each $x \in X$, the map $S_x: X \to X$ given by $S_x(y)=y*x$ is an automorphism of the quandle $X$ fixing $x$, called an {\it inner automorphism} of $X$. The group generated by all such automorphisms is denoted by $\Inn(X)$.  The fact that $S_x$ is a bijection for each $x \
\in X$ is equivalent to existence of another binary operation on $X$, written $(x, y) \mapsto x \ast^{-1} y$, and satisfying $$x \ast y = z~\textrm{if and only if}~ x = z \ast^{-1} y$$ for all $x, y, z \in X$. Further, the map $S: X \to \Conj \big(\Inn(X) \big)$ given by $S(x)=S_x$ is a quandle anti-homomorphism. In other words,
\begin{equation}\label{inn-anti-homo}
S_{x\ast y}= S_x \ast^{-1} S_y=  S_y \circ S_x \circ S_y^{-1}
\end{equation}
for $x, y \in X$. It is easy to see that $X$ and $Y$ are quandles and $f: X \to Y$ a map, then $f(x_1 \ast x_2)= f(x_1) \ast f(x_2)$ if and only if  $f(x_1 \ast^{-1} x_2)= f(x_1) \ast^{-1} f(x_2)$ for all $x_1, x_2 \in X$.
\vspace*{.6mm}

A quandle $X$ is called {\it trivial} if $x*y=x$ for all $x, y \in X$. Note that a trivial quandle can contain arbitrary number of elements.
\vspace*{.6mm}


\section{Residually finite quandles and some properties}\label{sec-rf-closure}
Recall that a group $G$ is called {\it residually finite} if for each $g \in G$   with $g \neq 1$, there exists a finite group $F$ and a homomorphism $\phi : G \rightarrow F $ such that $\phi(g) \neq 1$.
\vspace*{.6mm}

It is easy to see that a group $G$ being residually finite is equivalent to saying that for $g, h \in G$ with $g \neq h$, there exists a finite group $F$ and a homomorphism $\phi: G \rightarrow F $ such that $ \phi(g)\neq \phi(h)$.
\vspace*{.6mm}

The preceding observation motivates the definition of residually finite quandles.
\vspace*{.6mm}

\begin{definition}
 A quandle $X$ is said to be {\it residually finite} if for all $x,y \in X$ with $x \neq y$, there exists a finite quandle $F$ and quandle homomorphism $\phi:X \rightarrow F$ such that $\phi(x) \neq \phi(y)$.
\end{definition}
\vspace*{.6mm}

In \cite{Mal'cev}, Mal'cev gave the definition of a residually finite algebra, and proved that for some algebras residual finiteness implies that the word problem is solvable. The preceding definition is a particular case of Mal'cev's definition.
\vspace*{.6mm}

Obviously, every finite quandle is residually finite, and every subquandle of a residually finite quandle is residually finite. Further, we have the following.
\vspace*{.6mm}

\begin{proposition}\label{trivial-res-finite}
Every trivial quandle is residually finite.
\end{proposition}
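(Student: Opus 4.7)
The plan is to directly exhibit, for any pair of distinct points of a trivial quandle, a quandle homomorphism to a small finite (trivial) quandle that separates them. Since triviality of the quandle operation makes the homomorphism condition vacuous, this reduces to a pure set-theoretic separation statement, which is immediate.

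More concretely, let $X$ be a trivial quandle and fix $x, y \in X$ with $x \neq y$. Let $F = \{0, 1\}$ be the trivial quandle on two elements, where $a * b = a$ for all $a, b \in F$. Define $\phi : X \to F$ by $\phi(x) = 0$ and $\phi(z) = 1$ for every $z \in X$ with $z \neq x$. Then $\phi(x) \neq \phi(y)$ by construction, so the only remaining point is to verify that $\phi$ is a quandle homomorphism.

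For this, observe that for any $a, b \in X$ we have $\phi(a * b) = \phi(a)$, since $a * b = a$ in the trivial quandle $X$. On the other hand, $\phi(a) * \phi(b) = \phi(a)$ in the trivial quandle $F$. Hence $\phi(a * b) = \phi(a) * \phi(b)$ holds for all $a, b \in X$, so $\phi$ is a quandle homomorphism.

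There is essentially no obstacle here; the argument is a one-line verification because every set-theoretic map between trivial quandles is automatically a quandle homomorphism. Thus any two distinct elements of $X$ can be separated by a homomorphism to the two-element trivial quandle, proving that $X$ is residually finite.
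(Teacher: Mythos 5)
Your proof is correct and follows essentially the same route as the paper: both separate $x$ from everything else via a map onto a two-element trivial quandle, using the fact that the homomorphism condition is automatic for maps between trivial quandles. Your explicit verification of the homomorphism condition is a nice touch that the paper leaves as ``easy to see.''
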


\begin{proof}
Let $X$ be a trivial quandle. If $X$ has only one element, then there is nothing to prove. Suppose that $X$ has at least two elements. Let $x, y \in X$ with $x \neq y$. Consider the trivial subquandle $\{x, y \}$ of $X$ and define $\phi: X \to \{x, y \}$ by $\phi(x)=x$ and $\phi(z)=y$ for all $z \neq x$. Then it is easy to see that $\phi$ is a quandle homomorphism with $\phi(x) \neq \phi(y)$, and hence $X$ is residually finite.
\end{proof}
\vspace*{.6mm}

Next, we investigate some closure properties of residually finite quandles. Let $\lbrace X_i\rbrace_{i \in I}$ be an indexed family of quandles and $X= \prod_{i \in I}X_i$ their cartesian product. Then $X$ is itself a quandle, called {\it product quandle}, with binary operation given by  $$(x_i) \ast (y_i)= (x_i \ast y_i)$$ for $(x_i), (y_i) \in X$.  Further, for each $j \in I$, the projection map $$\pi_j: X \rightarrow X_j$$ given by $\pi_j \big((x_i)\big)=x_j$ is a quandle homomorphism.
\vspace*{.6mm}

\begin{proposition}\label{direct-prod-rf}
Let $\{X_i\}_{i \in I}$ be an indexed family of residually finite quandles. Then the product quandle $X = \prod_{i \in I}X_i$ is residually finite.
\end{proposition}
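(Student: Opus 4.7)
The plan is to mimic the standard group-theoretic argument: exploit the projection homomorphisms $\pi_j : X \to X_j$ together with the residual finiteness of each factor.

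First I would pick arbitrary distinct points $x = (x_i)$ and $y = (y_i)$ in $X$. Since $x \neq y$, there exists at least one index $j \in I$ for which the coordinates differ, that is $x_j \neq y_j$ in $X_j$. This is the only place where the structure of the cartesian product is used in a substantive way; from here on I am just invoking hypotheses about a single factor.

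Next I would apply the residual finiteness of $X_j$ to the pair $x_j, y_j$. This yields a finite quandle $F$ and a quandle homomorphism $\phi_j : X_j \to F$ with $\phi_j(x_j) \neq \phi_j(y_j)$. Compose this with the projection to obtain $\phi := \phi_j \circ \pi_j : X \to F$. Both $\pi_j$ and $\phi_j$ are quandle homomorphisms, so $\phi$ is a quandle homomorphism into a finite quandle, and by construction
\[
\phi(x) = \phi_j(x_j) \neq \phi_j(y_j) = \phi(y).
\]
Since $x$ and $y$ were arbitrary distinct elements of $X$, this shows $X$ is residually finite.

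There is essentially no obstacle here: the only ingredients are that $\pi_j$ is a quandle homomorphism (established right before the statement) and that having distinct elements in a cartesian product forces some coordinate to differ. The argument works for arbitrary index sets $I$, not just finite ones, and is formally identical to the group-theoretic proof that products of residually finite groups are residually finite.
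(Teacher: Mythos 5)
Your argument is correct and is essentially identical to the paper's own proof: both locate an index where the coordinates differ, apply residual finiteness of that factor, and compose the resulting homomorphism with the projection. Nothing further is needed.
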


\begin{proof}
Let $x=(x_i),  y=(y_i) \in X$ such that $x \neq y$. Then there exists an $i_0 \in I$ such that $x_{i_0} \neq y_{i_0} $. Since $X_{i_0}$ is residually finite, there exists a finite quandle $F$ and a homomorphism $ \phi: X_{i_0} \rightarrow F$ such that $ \phi(x_{i_0}) \neq \phi(y_{i_0})$. The homomorphism $ \phi ' := \phi \circ \pi_{i_0}$ satisfy $ \phi '(x) \neq \phi ' (y)$, and hence $X$ is a residually finite quandle.
\end{proof}
\vspace*{.6mm}

\begin{proposition}
The following statements are equivalent for a quandle $X$:
\begin{enumerate}
\item $X$ is residually finite;
\item there exists a family $\{W_i\}_{i \in I}$ of finite quandles such that the quandle $X$ is isomorphic to a subquandle of the product quandle $\prod _{i \in I} W_i $.
\end{enumerate}
\end{proposition}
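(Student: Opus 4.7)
The plan is to prove the two implications separately, each by a short and essentially formal argument: $(2) \Rightarrow (1)$ uses only closure under subquandles and products, while $(1) \Rightarrow (2)$ uses the standard ``diagonal embedding'' construction that separates pairs of distinct points.

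For the implication $(2) \Rightarrow (1)$, I would observe that each finite quandle $W_i$ is trivially residually finite, so by Proposition \ref{direct-prod-rf} the product $\prod_{i \in I} W_i$ is residually finite. Since residual finiteness passes to subquandles (noted in the excerpt just before Proposition \ref{trivial-res-finite}), any subquandle of this product — and hence $X$ itself — is residually finite.

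For $(1) \Rightarrow (2)$, suppose $X$ is residually finite. Let $I = \{(x,y) \in X \times X \mid x \neq y\}$. For each $i = (x,y) \in I$, residual finiteness supplies a finite quandle $W_i$ and a quandle homomorphism $\phi_i : X \to W_i$ with $\phi_i(x) \neq \phi_i(y)$. Define
\[
\Phi : X \longrightarrow \prod_{i \in I} W_i, \qquad \Phi(z) = \bigl(\phi_i(z)\bigr)_{i \in I}.
\]
Since the binary operation on the product quandle is componentwise and each $\phi_i$ is a quandle homomorphism, $\Phi$ is a quandle homomorphism. To check injectivity, take $x, y \in X$ with $x \neq y$; then for $i = (x,y) \in I$ the $i$-th coordinate satisfies $\phi_i(x) \neq \phi_i(y)$, so $\Phi(x) \neq \Phi(y)$. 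Therefore $\Phi$ is an injective quandle homomorphism, and $X$ is isomorphic to the subquandle $\Phi(X)$ of $\prod_{i \in I} W_i$, completing the proof.

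There is no real obstacle here: the only things to verify are that the diagonal map is a quandle homomorphism (which follows from the componentwise definition of the product operation) and that it is injective (which is exactly what residual finiteness provides, once the index set is chosen as the set of distinct ordered pairs). This parallels the familiar characterization of residually finite groups as subgroups of direct products of finite groups.
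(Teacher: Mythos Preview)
Your proof is correct and follows essentially the same approach as the paper: both directions are argued exactly as you do, with $(2)\Rightarrow(1)$ via Proposition~\ref{direct-prod-rf} and closure under subquandles, and $(1)\Rightarrow(2)$ via the diagonal map $\Phi=\prod_{(x,y)}\phi_{(x,y)}$ indexed by ordered pairs $x\neq y$.
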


\begin{proof}
The implication $(2) \implies (1)$ follows from Proposition  \ref{direct-prod-rf} and the fact that a  subquandle of a residually finite quandle is residually finite. Conversely, suppose that $X$ is residually finite. For each pair $(x,y) \in X \times X$ such that $x \neq y$, there exists a finite quandle $W_{(x,y)}$ and a homomorphism $ \phi_{(x,y)} : X \rightarrow W_{(x,y)}$ such that $ \phi_{(x,y)}(x) \neq \phi_{(x,y)}(y)$. Now consider the quandle $$ W= \underset{(x,y)\in X \times X,~ x \neq y}{\mathrm{\prod}} W_{(x,y)},$$ and define a homomorphism $\psi : X \rightarrow W $ by $$ \psi = \underset{(x,y)\in X \times X,~ x \neq y}{\mathrm{\prod}} \phi _{(x,y)},$$ which is clearly injective. Hence $X$ is residually finite being isomorphic to a subquandle of $W$.
\end{proof}
\vspace*{.6mm}

An inverse system of quandles $\{X_i, \pi_{ij},I \}$ consists of a directed set $I$, a family of quandles $\{X_i\}_{i \in I}$, and a collection of quandle homomorphisms $\pi_{ij}:X_j \to X_i$ for $i \le j$ in $I$ satisfying the following conditions:
\begin{enumerate}
\item $\pi_{ii}= \id_{X_i}$ for each $i \in I$;
\item $\pi_{ij} \circ \pi_{jk}= \pi_{ik}$ for all $i \le j \le k$ in $I$.
\end{enumerate}
Given an inverse system $\{X_i, \pi_{ij},I \}$ of quandles, as discussed above, we construct the product quandle $X = \prod_{i \in I}X_i$. Let $\varprojlim X_i$ be the subset of $X$ consisting of elements $(x_i) \in X$ such that $x_i=\pi_{ij}(x_j)$ for $i \le j$ in $I$. It is easy to see that $\varprojlim X_i$ is subquandle of $X$ called the inverse limit of the inverse system $\{X_i, \pi_{ij},I \}$. In view of Proposition \ref{direct-prod-rf} and the fact that  every subquandle of a residually finite quandle is residually finite, we obtain the following:

\begin{corollary}
The inverse limit of an inverse system of residually finite quandles is residually finite.
\end{corollary}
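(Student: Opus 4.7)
The plan is essentially a one-line verification assembling two facts already established in the paper, since the statement is flagged as an immediate corollary.

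First, I would recall the setup: given an inverse system $\{X_i,\pi_{ij},I\}$ of residually finite quandles, form the product quandle $X=\prod_{i\in I}X_i$, and recall that $\varprojlim X_i$ was defined as the subset of $X$ consisting of those tuples $(x_i)$ with $x_i=\pi_{ij}(x_j)$ whenever $i\le j$, and that it was already observed to be a subquandle of $X$.

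Next, I would apply Proposition \ref{direct-prod-rf} to conclude that the product quandle $X=\prod_{i\in I}X_i$ is residually finite, since each factor $X_i$ is residually finite by hypothesis. Finally, I would invoke the earlier observation (made just after the definition of residual finiteness) that every subquandle of a residually finite quandle is itself residually finite, and apply it to $\varprojlim X_i \subseteq X$ to conclude that $\varprojlim X_i$ is residually finite.

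There is no real obstacle here; the only minor point worth checking (and which I would note in a sentence) is that $\varprojlim X_i$ is genuinely closed under the quandle operation $(x_i)\ast (y_i)=(x_i\ast y_i)$, so that the notion of subquandle applies. This is immediate: if $(x_i),(y_i)\in\varprojlim X_i$ and $i\le j$, then $\pi_{ij}(x_j\ast y_j)=\pi_{ij}(x_j)\ast\pi_{ij}(y_j)=x_i\ast y_i$ because each $\pi_{ij}$ is a quandle homomorphism, so the coordinatewise product again satisfies the compatibility condition. After this sanity check the corollary follows in one line.
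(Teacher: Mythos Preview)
Your proposal is correct and matches the paper's approach exactly: the paper derives the corollary immediately from Proposition~\ref{direct-prod-rf} together with the fact that subquandles of residually finite quandles are residually finite, just as you do. Your extra verification that $\varprojlim X_i$ is closed under the coordinatewise operation is a welcome sanity check that the paper leaves as ``easy to see.''
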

\vspace*{.6mm}


\section{Residual finiteness of quandles arising from groups}\label{sec-rf-other-quandles}

In this section, we investigate residual finiteness of conjugation, core and Alexander quandles of residually finite groups. We also discuss residual finiteness of certain automorphism groups of residually finite quandles.
\vspace*{.6mm}

\begin{proposition}\label{conj-g-res-finite}
If $G$ is a residually finite group, then $\Conj(G)$ and $\Core(G)$ are both residually finite quandles.
\end{proposition}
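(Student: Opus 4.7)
The plan is to exploit the fact that both quandle structures $\Conj(G)$ and $\Core(G)$ are built functorially out of the group operations on $G$. Concretely, any group homomorphism $\phi \colon G \to H$ induces a quandle homomorphism $\Conj(G) \to \Conj(H)$, since
\[
\phi(b^{-1} a b) = \phi(b)^{-1} \phi(a) \phi(b),
\]
and similarly a quandle homomorphism $\Core(G) \to \Core(H)$, since
\[
\phi(b a^{-1} b) = \phi(b) \phi(a)^{-1} \phi(b).
\]
Moreover, when $F$ is a finite group, both $\Conj(F)$ and $\Core(F)$ are finite quandles on the underlying finite set $F$.

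Given this, the proof is immediate. Take any $x, y \in G$ with $x \neq y$. Since $G$ is residually finite (in the group sense), there exists a finite group $F$ and a group homomorphism $\phi \colon G \to F$ with $\phi(x) \neq \phi(y)$. Viewing $\phi$ as a map $\Conj(G) \to \Conj(F)$ (respectively $\Core(G) \to \Core(F)$) yields a quandle homomorphism into a finite quandle separating $x$ and $y$. Hence both $\Conj(G)$ and $\Core(G)$ are residually finite quandles.

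There is no substantive obstacle here; the only content is the routine verification that set-theoretic group homomorphisms respect the derived operations $a*b = b^{-1}ab$ and $a*b = ba^{-1}b$, and that residual finiteness of $G$ as a group therefore transfers verbatim. The argument for the Alexander quandle in the next part of the paper will presumably require more care because the map $\varphi \in \Aut(G)$ must be carried along compatibly, but for $\Conj(G)$ and $\Core(G)$ no such complication arises.
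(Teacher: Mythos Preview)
Your proof is correct and follows essentially the same argument as the paper: use residual finiteness of $G$ to find a group homomorphism $\phi:G\to F$ into a finite group separating the two elements, and observe that this same $\phi$ is a quandle homomorphism $\Conj(G)\to\Conj(F)$ (respectively $\Core(G)\to\Core(F)$). The paper's proof is terser but identical in content.
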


\begin{proof}
If $g_1,g_2 \in G$ with $g_1\neq g_2$, then there exists a finite group $F$ and a group homomorphism $\phi :G \rightarrow F $ such that $\phi(g_1) \neq \phi(g_2)$. The map $\Conj (\phi): \Conj(G) \rightarrow \Conj(F) $ given by $ \Conj (\phi)(g)=\phi(g)$ for $g \in \Conj(G)$ is a quandle homomorphism with $\Conj (\phi)(g_1) \neq \Conj (\phi)(g_2)$.  Similarly, $\Core(G)$ is residually finite.
\end{proof}
\vspace*{.6mm}

For generalised Alexander quandles, we prove
\vspace*{.6mm}

\begin{proposition}
Let $G$ be a residually finite group. If $\alpha :G \rightarrow G$ is an inner automorphism, then $\Alex(G, \alpha)$ is a  residually finite quandle.
\end{proposition}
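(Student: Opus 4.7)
The plan is to exploit the fact that an inner automorphism of $G$ preserves every normal subgroup of $G$. Write $\alpha(x) = c^{-1}xc$ for a fixed $c \in G$. My strategy is to separate any two distinct elements of $\Alex(G, \alpha)$ using a finite quandle of the form $\Alex(G/N, \bar{\alpha})$, where $N$ is a finite-index normal subgroup of $G$ and $\bar\alpha$ is the map induced by $\alpha$ on $G/N$.

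Given $a, b \in G$ with $a \neq b$, the residual finiteness of $G$ furnishes a normal subgroup $N$ of finite index in $G$ with $aN \neq bN$. Since $\alpha(N) = c^{-1} N c = N$, the automorphism $\alpha$ descends to a well-defined automorphism $\bar\alpha$ of $G/N$, and we obtain a finite generalised Alexander quandle $\Alex(G/N, \bar\alpha)$. It remains to check that the canonical projection $\pi : G \to G/N$ is a quandle homomorphism from $\Alex(G, \alpha)$ to $\Alex(G/N, \bar\alpha)$, which follows from the direct computation
\[
\pi(a * b) = \pi\bigl(\alpha(ab^{-1})b\bigr) = \bar\alpha\bigl(\pi(a)\pi(b)^{-1}\bigr)\pi(b) = \pi(a) * \pi(b).
\]
Since $\pi(a) \neq \pi(b)$ by the choice of $N$, this exhibits a finite quandle separating $a$ from $b$, proving residual finiteness.

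The whole argument rests on the essentially trivial observation that inner automorphisms preserve every normal subgroup, which bypasses the delicate task of arranging $\alpha$-invariance of $N$. For an arbitrary (possibly outer) $\alpha$, producing a finite-index normal subgroup that is simultaneously $\alpha$-invariant and separates a prescribed pair of points would be the main obstacle, and is presumably why the hypothesis here is restricted to the inner case.
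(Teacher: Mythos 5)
Your proof is correct and is essentially the paper's argument: the paper passes to a finite quotient $F$ of $G$ separating the two elements and equips it with the inner automorphism induced by the image of the conjugating element, which is exactly your $\Alex(G/N,\bar\alpha)$ with $\bar\alpha$ conjugation by the image of $c$. Your closing remark about why inner-ness matters (it guarantees $\alpha$-invariance of every finite-index normal subgroup) correctly identifies the point the hypothesis is exploiting.
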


\begin{proof}
Let $\alpha$ be the inner automorphism induced by $g_0 \in G$. If $g_1, g_2 \in G$ such that $g_1 \neq g_2$, then there exists a finite group $F$ and a group homomorphism $\psi:G \rightarrow F$ such that $\psi(g_1) \neq \psi(g_2)$. Let  $\beta$ be the inner automorphism of $F$ induced by $\psi(g_0)$. It follows that $\psi$ viewed as a map $\psi: \Alex(G, \alpha) \rightarrow \Alex(F, \beta)$  is a quandle homomorphism with $\psi(g_1) \neq \psi(g_2)$, and hence $\Alex(G, \alpha)$ is residually finite.
\end{proof}
\vspace*{.6mm}

It is well-known that the automorphism group of a finitely generated residually finite group is residually finite \cite[p.414]{Magnus-Karrass-Solitar}. For the inner automorphism group of residually finite quandles, we have the following result.
\vspace*{.6mm}

\begin{theorem}
If $X$ is a residually finite quandle, then $\Inn(X)$ is a residually finite group.
\end{theorem}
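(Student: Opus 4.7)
The plan is to leverage residual finiteness of $X$ to separate any non-identity element $g \in \Inn(X)$ from the identity via a homomorphism to a finite inner automorphism group. Since $g \neq \id_X$, I can choose $y \in X$ with $g(y) \neq y$. By residual finiteness of $X$, there is a quandle homomorphism $\phi: X \to F'$ to a finite quandle $F'$ with $\phi(g(y)) \neq \phi(y)$. Replacing $F'$ by its subquandle $F = \phi(X)$, I may assume $\phi$ is surjective. Since $F$ is finite, $\Inn(F)$ embeds in the symmetric group on $F$, so it is a finite group.

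Next, I would show that $\phi$ induces a group homomorphism $\psi: \Inn(X) \to \Inn(F)$ determined on generators by $\psi(S_x) = S_{\phi(x)}$. The starting point is the intertwining identity
\begin{equation*}
\phi \circ S_x = S_{\phi(x)} \circ \phi,
\end{equation*}
which merely records that $\phi$ is a quandle homomorphism; the analogous identity for $S_x^{-1}$ and $S_{\phi(x)}^{-1}$ follows from the observation in Section~\ref{prelim} that a quandle homomorphism automatically preserves $\ast^{-1}$. By induction, any word $w = S_{x_1}^{\varepsilon_1} \cdots S_{x_n}^{\varepsilon_n}$ in the generators of $\Inn(X)$ satisfies $\phi \circ w = \bar{w} \circ \phi$, where $\bar{w} = S_{\phi(x_1)}^{\varepsilon_1} \cdots S_{\phi(x_n)}^{\varepsilon_n}$. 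If $w$ acts as the identity on $X$, this gives $\bar{w}(\phi(z)) = \phi(z)$ for every $z \in X$; since $\phi$ is surjective, $\bar{w} = \id_F$ in $\Inn(F)$. Hence $\psi$ is well-defined (and visibly a group homomorphism).

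Finally, applying the intertwining identity to $g$ yields $\psi(g)(\phi(y)) = \phi(g(y)) \neq \phi(y)$, so $\psi(g) \neq \id_F$ in the finite group $\Inn(F)$. As $g$ was an arbitrary non-identity element, this exhibits $\Inn(X)$ as residually finite. The only delicate step is the well-definedness of $\psi$, and that relies on two maneuvers: restricting the codomain to $\phi(X) = F$ to ensure surjectivity, and using the intertwining relations to transport an identity in $\Inn(X)$ to the corresponding identity in $\Inn(F)$. Everything else is routine.
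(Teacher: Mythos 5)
Your proof is correct, and its overall strategy coincides with the paper's: both separate a non-identity $g \in \Inn(X)$ from the identity by pushing it down to $\Inn(F)$ for a finite quandle $F$ via the assignment $S_x \mapsto S_{\phi(x)}$. The genuine difference lies in how well-definedness of this induced map is justified, and here your route is the more robust one. The paper argues that ``relations in $\Inn(X)$ are induced by relations in $X$'' and checks only that relations of the form $S_{x \ast y} = S_y S_x S_y^{-1}$ are preserved; but $\Inn(X)$ is a concrete permutation group on $X$ and can satisfy relations among the $S_{x_i}$ that are not consequences of the quandle presentation (for a trivial quandle every $S_x$ is the identity, which no quandle relation forces), so that step as stated leaves something to verify. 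Your argument — corestrict $\phi$ to its image so that it is surjective, establish the intertwining identity $\phi \circ w = \bar{w} \circ \phi$ for every word $w$ in the $S_x^{\pm 1}$, and conclude that $w = \id_X$ forces $\bar{w}$ to fix all of $\phi(X) = F$ and hence equal $\id_F$ — handles \emph{all} relations of $\Inn(X)$ uniformly, at the small price of the corestriction step (which is harmless, since the image of a quandle homomorphism is a finite subquandle and its inner automorphism group is finite). In short: same skeleton, but your treatment of the only delicate point is cleaner and closes a gap that the paper's version papers over.
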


\begin{proof}
If $X=\langle x_i~|~i \in I \rangle$, then $\Inn(X)=\langle S_{x_i}~|~i \in I \rangle$. Let $S_{a_1}^{e_1} \circ S_{a_2}^{e_2} \circ \cdots \circ S_{a_m}^{e_m}\neq 1$ be an element of $\Inn(X)$, where $a_j \in \{ x_i~|~i \in I\}$ and $e_j \in \{1, -1\}$. Then there exists an element $x \in X$ such that $$S_{a_1}^{e_1} \circ S_{a_2}^{e_2} \circ \cdots \circ S_{a_m}^{e_m}(x) \neq x,$$ equivalently
$$\big(\big((x \ast^{e_m} a_m) \ast^{e_{m-1}} a_{m-1} \big) \cdots \big) \ast ^{e_1} a_1 \neq x.$$ Since $X$ is residually finite, there exists a finite quandle $F$ and a quandle homomorphism $\phi: X \rightarrow F$ such that
\begin{equation}\label{phi-big}
\phi\big(\big(\big((x \ast^{e_m} a_m) \ast^{e_{m-1}} a_{m-1} \big) \cdots \big) \ast ^{e_1} a_1\big) \neq \phi(x).
\end{equation}

Define a map $$\widetilde \phi: \big\{ S_{x_i}^{\pm 1}~|~i \in I \big\} \rightarrow \Inn(F)$$ by setting $$\widetilde \phi\big(S_{x_i}^{\pm 1}\big)=S_{\phi(x_i)}^{\pm 1}.$$ We claim that $\widetilde \phi$ preserves relations in $\Inn(X)$, and hence extends to a group homomorphism. Observe that relations in $\Inn(X)$ are induced by relations in $X$. If $x\ast y=z$ is a relation in $X$, by \eqref{inn-anti-homo}, the induced relation in $\Inn(X)$ is $$S_z \circ  S_y= S_y \circ S_x.$$ Since $\phi$ is a quandle homomorphism, we  have $\phi(x) \ast \phi(y)=\phi(z)$ in $F$. Again, by \eqref{inn-anti-homo}, we have $$S_{\phi(z)} \circ  S_{\phi(y)}= S_{\phi(y)} \circ S_{\phi(x)}.$$ This proves our claim, and hence $\widetilde \phi$ extends to a group homomorphism $\widetilde \phi: \Inn(X) \to \Inn(F)$. If $\widetilde \phi \big(S_{a_1}^{e_1} \circ S_{a_2}^{e_2} \circ \cdots \circ S_{a_m}^{e_m} \big) = 1$, then evaluating both the sides at $\phi(x)$ contradicts \eqref{phi-big}. Hence, $\Inn(X)$ is a residually finite group.
\end{proof}
\vspace*{.6mm}

Next, we present some observations for automorphism groups of core and conjugation quandles of residually finite groups.
\vspace*{.6mm}

\begin{proposition}
If $G$ is a finitely generated abelian group with no $2$-torsion, then $\Aut\big(\Core(G)\big)$ is residually finite group.
\end{proposition}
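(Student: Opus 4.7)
The plan is to identify $\Aut\big(\Core(G)\big)$ explicitly with the affine group of $G$, and then to establish residual finiteness of this semidirect product by producing enough finite quotients by hand.

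First, I would compute $\Aut\big(\Core(G)\big)$. Writing $G$ additively, the core operation is $a \ast b = 2b - a$, so a bijection $f : G \to G$ is a quandle automorphism precisely when $f(2b - a) = 2 f(b) - f(a)$ for all $a, b \in G$. Setting $g(x) := f(x) - f(0)$, this identity reduces to $g(2b - a) = 2 g(b) - g(a)$ with $g(0) = 0$. Specialising at $a = 2b'$ gives $g\big(2(b - b')\big) = 2\big(g(b) - g(b')\big)$, while the case $a = 0$ yields $g(2x) = 2 g(x)$; combining these gives $2\, g(b - b') = 2\big(g(b) - g(b')\big)$, and the no-$2$-torsion assumption forces $g \in \Aut(G)$. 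Conversely, every map $x \mapsto \varphi(x) + c$ with $\varphi \in \Aut(G)$ and $c \in G$ lies in $\Aut\big(\Core(G)\big)$. Hence $\Aut\big(\Core(G)\big) \cong G \rtimes \Aut(G)$ under the natural action.

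Next, I would deduce residual finiteness of $G \rtimes \Aut(G)$ by exhibiting enough characteristic finite-index subgroups of $G$. For each $n \ge 1$ the subgroup $nG$ is characteristic in $G$, and since $G \cong \mathbb{Z}^k \oplus T$ with $T$ finite, the quotient $G/nG$ is finite; this yields a surjection $G \rtimes \Aut(G) \twoheadrightarrow (G/nG) \rtimes \Aut(G/nG)$ onto a finite group. To separate a non-identity element $(g, \varphi)$: if $g \neq 0$, choose $n$ with $g \notin nG$, which is possible because $\bigcap_{n} nG = 0$ for finitely generated abelian $G$; if $g = 0$ but $\varphi \neq \id$, pick $h \in G$ with $\varphi(h) \neq h$ and choose $n$ with $\varphi(h) - h \notin nG$, so that the induced automorphism of $G/nG$ moves $h + nG$.

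The main obstacle is the first step: the identification $\Aut\big(\Core(G)\big) \cong G \rtimes \Aut(G)$ genuinely uses the absence of $2$-torsion, since without it there can be quandle automorphisms of $\Core(G)$ that fail to be affine, and the conclusion may no longer hold. Once this structural description is in place, residual finiteness is straightforward from the residual finiteness of the finitely generated abelian group $G$. As an alternative to the explicit finite-quotient argument, one could observe that the affine group $G \rtimes \Aut(G)$ embeds into $\GL_{N}(\mathbb{Z})$ for a suitable $N$ and invoke Mal'cev's theorem on residual finiteness of finitely generated linear groups in characteristic zero.
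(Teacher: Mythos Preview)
Your proof is correct and follows the paper's strategy: identify $\Aut\big(\Core(G)\big)$ with the affine group $G \rtimes \Aut(G)$ (the paper cites \cite{Bardakov-Dey-Singh} for this, while you rederive it directly using the no-$2$-torsion hypothesis), and then establish residual finiteness of this semidirect product. Your explicit finite-quotient construction via the characteristic subgroups $nG$ is simply a hands-on version of the paper's appeal to general closure properties of residual finiteness; the only minor slip is that the map to $(G/nG) \rtimes \Aut(G/nG)$ need not be surjective, but its image is finite, which is all you use.
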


\begin{proof}
Since $G$ is a finitely generated abelian group, it is residually finite, and hence $\Aut(G)$ is also residually finite. Moreover, semi-direct product of residually finite groups is residually finite. By \cite[Theorem 4.2]{Bardakov-Dey-Singh}, $\Aut\big(\Core(G)\big) \cong G \rtimes \Aut(G)$, and hence $\Aut\big(\Core(G)\big)$ is residually finite.
\end{proof}
\vspace*{.6mm}

\begin{proposition}
If $G$ is a finitely generated residually finite group with trivial centre, then $\Aut \big(\Conj(G)\big)$ is residually finite.
\end{proposition}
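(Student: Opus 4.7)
My plan is to reduce the statement to the classical fact, cited already in the excerpt, that the automorphism group of a finitely generated residually finite group is residually finite (\cite[p.414]{Magnus-Karrass-Solitar}). The reduction will proceed through the identification $\Aut(\Conj(G)) = \Aut(G)$, with the trivial centre hypothesis used essentially. One inclusion is immediate: every $\psi \in \Aut(G)$ satisfies $\psi(b^{-1}ab) = \psi(b)^{-1}\psi(a)\psi(b)$, so $\Aut(G) \subseteq \Aut(\Conj(G))$. The content of the proof is the reverse inclusion.

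To establish it, I would take $\phi \in \Aut(\Conj(G))$ and exploit the quandle axiom $\phi(b^{-1}ab) = \phi(b)^{-1}\phi(a)\phi(b)$ by computing $\phi\bigl((gh)^{-1} a (gh)\bigr)$ in two different ways: directly with $b = gh$, and iteratively by rewriting $(gh)^{-1} a (gh) = h^{-1}(g^{-1} a g) h$ and applying the axiom twice. Equating the two expressions gives, for every $a \in G$,
\[
\phi(gh)^{-1} \phi(a) \phi(gh) = \bigl(\phi(g)\phi(h)\bigr)^{-1} \phi(a) \bigl(\phi(g)\phi(h)\bigr).
\]
Since $\phi$ is a bijection, this says that $\phi(gh)\bigl(\phi(g)\phi(h)\bigr)^{-1}$ commutes with every element of $G$, hence lies in $\Z(G)$. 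The trivial centre hypothesis then forces $\phi(gh) = \phi(g)\phi(h)$, so $\phi$ is a group homomorphism, and being a bijection it belongs to $\Aut(G)$.

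With the equality $\Aut(\Conj(G)) = \Aut(G)$ in hand, residual finiteness follows at once from the cited classical theorem applied to the finitely generated residually finite group $G$. I expect the only non-routine step to be the algebraic manipulation above; the role of the trivial centre hypothesis is precisely to upgrade a ``modulo $\Z(G)$'' conclusion into an exact identity, and without it one would at best obtain an embedding of $\Aut(\Conj(G))$ into a larger group (involving translations or other ambiguity) that would require a separate residual finiteness analysis.
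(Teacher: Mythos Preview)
Your proposal is correct and follows essentially the same approach as the paper: both reduce to the identity $\Aut(\Conj(G)) = \Aut(G)$ under the trivial-centre hypothesis and then invoke the classical result that the automorphism group of a finitely generated residually finite group is residually finite. The only difference is that the paper cites this identity from \cite[Corollary 4.2]{Bardakov-Nasybullov-Singh}, whereas you supply a direct elementary argument for it.
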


\begin{proof}
Since $G$ has trivial center, by \cite[Corollary 4.2]{Bardakov-Nasybullov-Singh},  $\Aut \big(\Conj(G)\big)=\Aut(G)$, which is residually finite as $G$ is so.
\end{proof}
\vspace*{.6mm}

\section{Residual finiteness of free quandles}\label{sec-rf-free-quandles}

In this section, we consider residual finiteness of free quandles, the free objects in the category of quandles.
\vspace*{.6mm}

\begin{definition}
A free quandle on a non-empty set $S$ is a quandle $FQ(S)$ together with a map $\phi:S \rightarrow FQ(S)$ such that for any other map $\rho:S \rightarrow X$, where $X$ is a quandle, there exists a unique quandle homomorphism $\bar{\mathbb{\rho}}:FQ(S) \rightarrow X$ such that $\bar{\mathbb{\rho}} \circ \phi =\rho$.
\end{definition}
\vspace*{.6mm}

A free rack is defined analogously. It follows from the definition that a free quandle (a free rack) is unique up to isomorphism, and every quandle (rack) is quotient of a free quandle (rack).
\vspace*{.6mm}

The following construction of a free rack is due to Fenn and Rourke \cite[p.351]{Fenn-Rourke}. Let $S$ be a set and $F(S)$ the free group on $S$. Define
 $$FR(S):=S \times F(S) = \big\lbrace a^{w}:= (a,w) \mid a \in S, w \in F(S) \big\rbrace $$ with the operation defined as $$a^{w} \ast b^{u}:= a^{wu^{-1}bu}.$$
It can be seen that $FR(S)$ is a free rack on $S$. 

A model of the free quandle on the set $S$ is due to Kamada \cite{Kamada2012, Kamada2017}, who defined the free quandle $FQ(S)$ on $S$ as a quotient of $FR(S)$ modulo the equivalence relation generated by $$a^{w}= a^{aw}$$ for $a \in S$ and $w \in F(S)$. It is not difficult to check that $FQ(S)$ is quandle satisfying the above universal property.
\par

There is another model of free quandle on a set $S$ \cite[Example 2.16]{Nosaka}, which is defined as the subquandle of $\Conj\big(F(S) \big)$ consisting of all conjugates of elements of $S$. For the benefit of readers, we present an explicit isomorphism between the two models.

\begin{proposition}\label{equivalence-two-models}
The map $\Phi: FQ(S) \rightarrow \Conj \big(F(S)\big)$ given by $\Phi (a^{w})=w^{-1} aw$ is an embedding of quandles.
 \end{proposition}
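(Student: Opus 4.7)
The plan is to verify three things in succession: that $\Phi$ is well-defined on $FQ(S)$, that it is a quandle homomorphism, and that it is injective. Well-definedness amounts to checking that the formula respects the generating relation $a^w = a^{aw}$, which is immediate since $\Phi(a^{aw}) = (aw)^{-1} a (aw) = w^{-1} a w = \Phi(a^w)$. Being a quandle homomorphism is equally direct: $\Phi(a^w \ast b^u) = \Phi(a^{wu^{-1}bu}) = (wu^{-1}bu)^{-1} a (wu^{-1}bu)$, while $\Phi(a^w) \ast \Phi(b^u) = (u^{-1}bu)^{-1}(w^{-1}aw)(u^{-1}bu)$, and both reduce to the same word in $F(S)$.

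The heart of the argument is injectivity. Suppose $\Phi(a^w) = \Phi(b^u)$, i.e., $w^{-1} a w = u^{-1} b u$ in $F(S)$. Setting $v = w u^{-1}$, this becomes $v^{-1} a v = b$, so the generators $a$ and $b$ are conjugate in $F(S)$. By the standard conjugacy theory for free groups, two cyclically reduced words that are conjugate are cyclic permutations of one another; since generators have length one, this forces $a = b$. Now $w^{-1} a w = u^{-1} a u$ means that $u w^{-1}$ centralizes $a$, and since the centralizer of a free generator in $F(S)$ is the cyclic subgroup $\langle a \rangle$, we obtain $u = a^k w$ for some $k \in \mathbb{Z}$.

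It remains to show that $a^w = a^{a^k w}$ in $FQ(S)$ for every $k \in \mathbb{Z}$. The defining relation gives $a^v = a^{av}$, and applied to $v$ replaced by $a^{-1}v$ it also yields $a^{a^{-1}v} = a^v$; hence by induction $a^v = a^{a^k v}$ for every integer $k$. Taking $v = w$ gives $a^u = a^w$ in $FQ(S)$, which, together with $a = b$, proves injectivity.

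I expect the main obstacle to be the injectivity step; the conjugacy and centralizer facts for free groups are standard but must be invoked correctly, and one must recognize that the equivalence relation defining $FQ(S)$ is tuned precisely to absorb the ambiguity coming from the centralizer $\langle a \rangle$. The well-definedness and homomorphism checks are essentially formal and can be dispatched in a line each.
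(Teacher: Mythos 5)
Your proof is correct and follows essentially the same route as the paper: the homomorphism property by direct computation, and injectivity by splitting into the case of distinct generators (not conjugate in a free group) and the case of equal generators (centralizer of a generator is $\langle a\rangle$, with the defining relation $a^{w}=a^{aw}$ absorbing the resulting power of $a$). Your additional explicit check of well-definedness on the relation $a^{w}=a^{aw}$ is a small but welcome extra that the paper leaves implicit.
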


\begin{proof}
Let $a_{1}^{w_{1}},a_{2}^{w_{2}} \in FQ(S)$. Then $\Phi(a_{1}^{w_{1}})=w_{1}^{-1}a_{1}w_{1}$, $\Phi(a_{2}^{w_{2}})=w_{2}^{-1}a_{2}w_{2}$ and $a_{1}^{w_{1}} \ast a_{2}^{w_{2}}= a_{1}^{w_{1}w_{2}^{-1}a_{2}w_{2}}$. Further,
\begin{align*}
\Phi(a_{1}^{w_{1}} \ast a_{2}^{w_{2}})&=\Phi(a_{1}^{w_{1}w_{2}^{-1}a_{2}w_{2}})\\
&=(w_1w_2^{-1}a_2w_2)^{-1}a_1(w_1w_2^{-1}a_2w_2)\\
&=w_2^{-1}a_2^{-1}w_2w_{1}^{-1}a_1w_1w_2^{-1}a_2w_2\\
&=(w_2^{-1}a_2w_2)^{-1}(w_1^{-1}a_1w_1)(w_2^{-1}a_2w_2)\\
&=\Phi(a_1^{w_1}) \ast \Phi(a_2^{w_2}),
\end{align*}
and hence $\Phi$ is a quandle homomorphism.  Let $a_1^{w_1},  a_2^{w_2} \in FQ(S)$ such that $a_1^{w_1}$  $ \neq $  $a_2^{w_2}$.
\par
Case 1: Suppose $ a_1 $ $ \neq $ $ a_2 $. If $\Phi( a_1^{w_1} )= \Phi( a_2^{w_2} )$, then $ w_1^{-1} a_1 w_1 = w_2^{-1} a_2 w_2$, which contradicts the fact that $F(S)$ is a free group. Hence  $\Phi( a_1^{w_1} ) \neq \Phi( a_2^{w_2} )$.
\par
Case 2: Suppose $a_1 = a_2=a$. If $\Phi( a^{w_1} )= \Phi( a^{w_2} )$, then $ w_1^{-1} a w_1 = w_2^{-1} a w_2$, which further implies that $ w_1 w_2 ^{-1} $ commutes with $a$ in $F(S)$. Since $F(S)$ is a free group, only powers of $a$ can commute with $a$, and hence $w_1 w_2^{-1} $ = $a^{i}$ for some integer $i$. Thus $w_1$= $a^{i} w_2$, which implies that $ a^{w_1}$ = $ a^{a^{i} w_2} = a^{ w_2}$ in $ FQ(S)$,  a contradiction. Hence $\Phi( a^{w_1} ) \neq \Phi( a^{w_2} )$, and $\Phi$ is an embedding of quandles.
\end{proof}
\vspace*{.6mm}

\begin{theorem}\label{free-quandle-rf}
Every free quandle is residually finite.
\end{theorem}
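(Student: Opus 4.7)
The plan is to exploit the embedding constructed in Proposition \ref{equivalence-two-models} and combine it with the known residual finiteness of free groups. Concretely, for any set $S$, the map $\Phi: FQ(S) \to \Conj\bigl(F(S)\bigr)$ defined by $\Phi(a^{w}) = w^{-1}aw$ identifies the free quandle $FQ(S)$ with a subquandle of the conjugation quandle of the free group $F(S)$. So it suffices to show that the ambient quandle $\Conj\bigl(F(S)\bigr)$ is residually finite, since subquandles of residually finite quandles are residually finite (as observed right after the definition).

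First I would invoke the classical fact (due to Magnus, or alternatively via the faithful representation of $F(S)$ into $\GL_{2}(\mathbb{Z})$ for countable $S$, and via embedding into residually finite groups for arbitrary $S$) that every free group $F(S)$ is residually finite. Second, I would apply Proposition \ref{conj-g-res-finite} to conclude that $\Conj\bigl(F(S)\bigr)$ is a residually finite quandle. Finally, since $FQ(S)$ embeds into $\Conj\bigl(F(S)\bigr)$ by Proposition \ref{equivalence-two-models}, it inherits residual finiteness as a subquandle.

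The proof is therefore essentially a two-line composition of prior results, and I do not anticipate a genuine obstacle. The only point worth being careful about is the choice of model for $FQ(S)$: one must use Proposition \ref{equivalence-two-models} to pass from the Fenn--Rourke/Kamada presentation $a^{w}$ (which a priori is only a quotient of a free rack and in which residual finiteness is not transparent) to the concrete realisation inside $\Conj\bigl(F(S)\bigr)$, where residual finiteness is immediate. A secondary remark is that this same argument simultaneously shows that free quandles are Hopfian once one has proved the general fact that finitely generated residually finite quandles are Hopfian, which the introduction advertises as a companion result.
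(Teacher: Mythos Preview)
Your proposal is correct and matches the paper's proof essentially line for line: embed $FQ(S)$ into $\Conj\bigl(F(S)\bigr)$ via Proposition~\ref{equivalence-two-models}, invoke residual finiteness of free groups, apply Proposition~\ref{conj-g-res-finite}, and conclude by the subquandle property. The paper's version is simply the terse form of what you wrote.
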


\begin{proof}
Let $FQ(S)$ be the free quandle on the set $S$. It is well-known that the free group $F(S)$ is residually finite \cite[Theorem 2.3.1]{Silberstein-Coornaert}. By Proposition \ref{conj-g-res-finite}, the quandle $\Conj\big(F(S)\big)$ is residually finite. Since $FQ(S)$ is a subquandle of $\Conj \big(F(S)\big)$, it follows that $FQ(S)$ is residually finite.
\end{proof}
\vspace*{.6mm}

The following is a well-known result for free groups \cite[p.42]{Kurosh}.
\vspace*{.6mm}

\begin{theorem}\label{free-group-symm}
If $F(S)$ is a free group on a set $S$ and $g \neq 1$ an element of $F(S)$, then there is a homomorphism $\rho: F(S) \to \Sa_n$ for some $n$ such that $\rho(g) \neq 1$, where $\Sa_n$ is the symmetric group on $n$ elements.
\end{theorem}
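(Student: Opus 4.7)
The plan is to combine two standard facts already available in the paper: the residual finiteness of the free group $F(S)$ and Cayley's theorem. Given $g \in F(S)$ with $g \neq 1$, residual finiteness of $F(S)$ (cited in the excerpt as Theorem 2.3.1 of Silberstein-Coornaert) produces a finite group $H$ and a group homomorphism $\psi : F(S) \to H$ with $\psi(g) \neq 1_H$. By Cayley's theorem, the left regular representation $\iota : H \hookrightarrow \Sa_{|H|}$ embeds $H$ as a subgroup of the symmetric group on $n := |H|$ letters. Setting $\rho := \iota \circ \psi$ yields a homomorphism $\rho : F(S) \to \Sa_n$ with $\rho(g) = \iota\big(\psi(g)\big) \neq 1$ by injectivity of $\iota$. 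This is essentially a one-line argument.

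If one prefers a self-contained combinatorial proof that does not invoke the residual finiteness of $F(S)$ as a black box, the classical alternative goes as follows. Write $g = x_{i_1}^{\epsilon_1} \cdots x_{i_k}^{\epsilon_k}$ as a reduced word in the free generators, take $n = k+1$, and for each generator $x \in S$ that occurs in $g$ attempt to define a permutation $\sigma_x$ of $\{0, 1, \ldots, k\}$ by imposing $\sigma_x(j-1) = j$ whenever $a_j := x_{i_j}^{\epsilon_j}$ equals $x$ (case $\epsilon_j = +1$) and $\sigma_x(j) = j-1$ whenever $a_j$ equals $x^{-1}$ (case $\epsilon_j = -1$). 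Generators not appearing in $g$ are sent to the identity. Extend each partially defined $\sigma_x$ to a full bijection of $\{0, \ldots, k\}$ arbitrarily and put $\rho(x) = \sigma_x$. The resulting homomorphism $\rho : F(S) \to \Sa_n$ then sends $g$ to a permutation that carries $0$ to $k$, and so is non-trivial.

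The one nontrivial step in the direct construction is to verify that the data imposed on each $\sigma_x$ is a consistent partial bijection, both as a partial function and as a partial injection. A conflict in either direction forces two consecutive letters of $g$ to be mutually inverse occurrences of the same generator, contradicting reducedness of the word $g$. In the residual-finiteness plus Cayley approach this combinatorics is packaged inside the cited result, so that argument is shorter and fits more naturally with the surrounding text of the paper; I would present it as the main proof and leave the combinatorial construction as a remark.
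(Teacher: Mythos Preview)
The paper does not supply its own proof of this theorem: it is stated as a well-known fact and attributed to Kurosh \cite[p.~42]{Kurosh}, with no argument given. So there is no ``paper's proof'' to match against, only the classical reference.

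Both arguments you outline are correct. Your first proof (residual finiteness of $F(S)$ followed by Cayley's embedding into $\Sa_{|H|}$) is valid and, as you note, dovetails with material already invoked in the surrounding text. Your second, combinatorial proof is in fact precisely the classical argument one finds in Kurosh and in standard combinatorial group theory texts: build partial permutations of $\{0,1,\dots,k\}$ from the reduced spelling of $g$ so that the image of $g$ moves $0$ to $k$. Your consistency check is the right one---a clash in the partial data forces an unreduced pair $x x^{-1}$ or $x^{-1}x$ in the word. One small caveat worth making explicit is the composition convention for permutations (reading the word left to right corresponds to acting on the right); with the opposite convention one simply renumbers, so correctness is unaffected.

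In short: the proposal is sound, and the second approach is the one behind the cited reference, while the first is a legitimate shortcut given what the paper has already quoted.
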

\vspace*{.6mm}

We prove an analogue of the preceding result for free quandles.
\vspace*{.6mm}

\begin{theorem}
Let $FQ(S)$ be a free quandle on a set $S$ and $x,y \in FQ(S)$ such that $x \neq y$. Then there is a quandle homomorphism $\phi:FQ(S) \rightarrow \Conj(\Sa_{n})$ for some $n$ such that $\phi(x) \neq \phi(y)$.
\end{theorem}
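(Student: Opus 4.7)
The plan is to combine the embedding from Proposition \ref{equivalence-two-models} with the classical fact (Theorem \ref{free-group-symm}) that the free group $F(S)$ separates its elements by finite symmetric groups. The composition of a quandle embedding with the conjugation functor applied to a group homomorphism yields exactly a quandle homomorphism of the desired type.

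First I would apply Proposition \ref{equivalence-two-models} to get the embedding $\Phi: FQ(S) \to \Conj\bigl(F(S)\bigr)$ with $\Phi(a^w) = w^{-1}aw$. Since $\Phi$ is injective and $x \neq y$, we have $\Phi(x) \neq \Phi(y)$ in $F(S)$, which means $\Phi(x)\Phi(y)^{-1} \neq 1$. By Theorem \ref{free-group-symm}, there exists a group homomorphism $\rho: F(S) \to \Sa_n$ for some $n$ with $\rho\bigl(\Phi(x)\Phi(y)^{-1}\bigr) \neq 1$, equivalently $\rho\bigl(\Phi(x)\bigr) \neq \rho\bigl(\Phi(y)\bigr)$.

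Next, I would observe that any group homomorphism $\rho: G \to H$ induces a quandle homomorphism $\Conj(\rho): \Conj(G) \to \Conj(H)$, since $\rho(b^{-1}ab) = \rho(b)^{-1}\rho(a)\rho(b)$. Therefore the composite
\[
\phi := \Conj(\rho) \circ \Phi : FQ(S) \longrightarrow \Conj(\Sa_n)
\]
is a quandle homomorphism, and by construction $\phi(x) = \rho\bigl(\Phi(x)\bigr) \neq \rho\bigl(\Phi(y)\bigr) = \phi(y)$, which is what we wanted.

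There is no real obstacle here: both ingredients are already in place in the paper, and the argument is essentially the same as that of Theorem \ref{free-quandle-rf}, refined by replacing the arbitrary finite quotient of $F(S)$ with a symmetric-group quotient. The only thing to keep track of is that $\phi$ lands in $\Conj(\Sa_n)$ rather than in a subquandle of it, which is automatic since the codomain of $\Conj(\rho)$ is the full conjugation quandle on $\Sa_n$.
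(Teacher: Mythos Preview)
Your proof is correct and essentially identical to the paper's own argument: both compose the embedding $\Phi$ of Proposition~\ref{equivalence-two-models} with $\Conj(\rho)$ for a separating homomorphism $\rho:F(S)\to\Sa_n$ supplied by Theorem~\ref{free-group-symm}. The only cosmetic difference is that the paper writes $g_2^{-1}g_1\neq 1$ where you write $\Phi(x)\Phi(y)^{-1}\neq 1$.
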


\begin{proof}
Recall that the map $\Phi: FQ(S) \rightarrow \Conj \big(F(S)\big) $ in Theorem \ref{free-quandle-rf} is an injective quandle homomorphism. Let $a_{1}^{w_{1}} \neq a_{2}^{w_{2}} \in FQ(S)$. Then $g_{1} \neq g_{2} \in F(S)$, where $g_{1}=\Phi(a_{1}^{w_{1}})$ and $g_{2}=\Phi(a_{2}^{w_{2}})$. Thus, $g_{2}^{-1}g_{1}$ is a non-trivial element of $F(S)$. By Theorem \ref{free-group-symm}, there exists a symmetric group $\Sa_{n}$ for some $n$ and a group homomorphism $\rho:F(S) \rightarrow \Sa_{n}$ such that $\rho(g_{1})\neq\rho(g_{2})$. Let $\Conj(\rho):\Conj \big(F(S)\big) \rightarrow \Conj(\Sa_{n})$ be the induced map with $\Conj(\rho)(g_{1})\neq \Conj(\rho)(g_{2})$. Taking  $\phi := \Conj(\rho) \circ \Phi : FQ(S) \rightarrow \Conj(\Sa_{n})$, we see that $\phi (a_{1}^{w_{1}}) \neq \phi(a_{2}^{w_{2}})$.
\end{proof}
\vspace*{.6mm}

\begin{definition}
A quandle $X$ is called {\it Hopfian} if every surjective quandle endomorphism of $X$ is injective.
\end{definition}
\vspace*{.6mm}

It is well-known that finitely generated residually finite groups are Hopfian  \cite{Mal'cev1}. We prove a similar result for quandles.
\vspace*{.6mm}

\begin{theorem}\label{hopfian-thm}
Every finitely generated residually finite quandle is Hopfian.
\end{theorem}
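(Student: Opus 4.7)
The plan is to mimic Mal'cev's classical argument for groups, using the fact that the set of quandle homomorphisms from a finitely generated quandle to a finite quandle is finite.

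Suppose for contradiction that $\phi : X \to X$ is a surjective quandle endomorphism that is not injective. Then there exist $x, y \in X$ with $x \neq y$ and $\phi(x) = \phi(y)$. By residual finiteness, choose a finite quandle $F$ and a quandle homomorphism $q : X \to F$ with $q(x) \neq q(y)$.

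The key observation is that $\Hom(X, F)$ is a finite set. Indeed, if $X$ is generated by $\{x_1, \dots, x_n\}$, then every element of $X$ is expressible as an iterated $\ast^{\pm 1}$-word in the generators, so any quandle homomorphism $X \to F$ is determined by the images of $x_1, \dots, x_n$. Hence $|\Hom(X, F)| \leq |F|^n < \infty$. Consequently, the sequence of homomorphisms $q, q \circ \phi, q \circ \phi^2, \dots : X \to F$ cannot be injective, so there exist $i < j$ with $q \circ \phi^i = q \circ \phi^j$.

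Since $\phi$ is surjective, so is $\phi^i$, and thus one may cancel on the right: for any $z \in X$ choose $z' \in X$ with $\phi^i(z') = z$, and then $q(z) = q(\phi^i(z')) = q(\phi^j(z')) = q(\phi^{j-i}(z))$. Setting $k := j - i \geq 1$, we obtain $q = q \circ \phi^k$. But $\phi(x) = \phi(y)$ forces $\phi^k(x) = \phi^k(y)$, whence
\[
q(x) = q(\phi^k(x)) = q(\phi^k(y)) = q(y),
\]
contradicting $q(x) \neq q(y)$. Therefore $\phi$ must be injective.

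The only point that requires any attention is the finiteness of $\Hom(X,F)$, which rests on the standard fact that a quandle homomorphism is determined by its values on any generating set; everything else is a direct translation of the group-theoretic proof, and I expect no genuine obstacle.
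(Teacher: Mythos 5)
Your proof is correct and follows essentially the same route as the paper: both are Mal'cev's classical argument, resting on the finiteness of $\Hom(X,F)$ for a finitely generated $X$ and a finite $F$, and on using surjectivity of powers of $\phi$ to pull points back. The paper phrases the pigeonhole step by showing the maps $\tau\circ\phi^{n}$ are pairwise distinct, while you extract a coincidence $q\circ\phi^{i}=q\circ\phi^{j}$ and cancel; these are the same argument arranged in contrapositive order.
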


\begin{proof}
Let $X$ be a finitely generated residually finite quandle and $\phi:X \rightarrow X$ a surjective quandle homomorphism. Suppose that $\phi$ is not injective. Let $x_1, x_2 \in X$ such that $ x_1 \neq x_2$ and $\phi(x_1) = \phi(x_2)$. Since $X$ is residually finite, there exist a finite quandle $F$ and a quandle homomorphism $\tau:X \rightarrow F$ such that $\tau(x_1) \neq \tau(x_2)$.
\par
We claim that the maps $\tau \circ \phi ^{n}:X \rightarrow F$ are distinct quandle homomorphisms for all $n \geq 0$. Let $0 \leq m <n$ be integers. Since $$ \phi ^{m}:X \rightarrow X$$ is surjective, there exist $y_1, y_2 \in X$ such that $\phi ^{m}(y_1)=x_1$ and $\phi ^{m}(y_2)=x_2$. Thus, we have $$\tau \circ \phi ^{m}(y_1) \neq \tau \circ \phi ^{m}(y_2),$$ whereas $$\tau \circ \phi ^{n}(y_1) = \tau \circ \phi ^{n}(y_2),$$ which proves our claim. Thus, there are infinitely many quandle homomorphisms from $X$ to $F$, which is a contradiction, since $X$ is finitely generated and $F$ is finite. Hence, $\phi$ is an automorphism, and  $X$ is Hopfian.
\end{proof}
\vspace*{.6mm}

By theorems \ref{free-quandle-rf} and \ref{hopfian-thm}, we obtain

\begin{corollary}\label{fg-free-quandle-hopf}
Every finitely generated free quandle is Hopfian.
\end{corollary}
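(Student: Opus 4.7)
The plan is to simply combine the two preceding theorems, since the corollary is essentially a pigeonhole assembly of results already in hand. First, I would invoke Theorem \ref{free-quandle-rf} on the finitely generated free quandle $FQ(S)$ (with $S$ finite) to conclude that $FQ(S)$ is residually finite. The embedding $\Phi : FQ(S) \to \Conj(F(S))$ from Proposition \ref{equivalence-two-models}, together with the residual finiteness of $F(S)$ and Proposition \ref{conj-g-res-finite}, supplies this fact with nothing further to check.

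Next, I would apply Theorem \ref{hopfian-thm}, which says every finitely generated residually finite quandle is Hopfian. Since $FQ(S)$ is generated by the finite set $\phi(S)$ (by the universal property), the hypotheses of Theorem \ref{hopfian-thm} are satisfied, and the conclusion is immediate.

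There is essentially no obstacle: the only subtlety worth mentioning is that "finitely generated" for $FQ(S)$ means $S$ is finite, and one should confirm that the image $\phi(S) \subseteq FQ(S)$ is indeed a generating set — but this is built into the definition of a free quandle. So the proof is a single sentence chaining the two theorems.
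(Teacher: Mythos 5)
Your proposal is correct and matches the paper's proof exactly: the corollary is stated immediately after the sentence ``By theorems \ref{free-quandle-rf} and \ref{hopfian-thm}, we obtain,'' which is precisely the chaining of residual finiteness of free quandles with the Hopfian property of finitely generated residually finite quandles that you describe. Your extra remark that $\phi(S)$ generates $FQ(S)$ is a harmless clarification and introduces no gap.
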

\vspace*{.6mm}

\begin{remark}
The preceding result is not true for infinitely generated free quandles (free racks). Indeed, if $FQ_{\infty}$ is a free quandle that is freely generated by an infinite set $\{x_1, x_2, \ldots \}$, then we can define a homomorphism $\varphi : FQ_{\infty} \to FQ_{\infty}$ by setting
$$
\varphi(x_1) = x_1~\textrm{and}~\varphi(x_i) = x_{i-1}
$$
for $i\ge 2$. It is easy to see that $\varphi$ is an epimorphism which is not an automorphism since $\varphi(x_1) = \varphi(x_2)$.
\end{remark}
\vspace*{.6mm}

The \textit{enveloping group} of a quandle $Q$, denoted by $G_Q$, is the group with $Q$ as the set of generators and defining relations $$x \ast y=y^{-1} x y$$ for all $x,y\in Q$. For example, if $Q$ is a trivial quandle, then $G_Q$ is the free abelian group of rank the cardinality of $Q$.
\vspace*{.6mm}

Since every quandle is quotient of a free quandle, a quandle $Q$ can be defined by a set of generators and defining relations as
$$
Q = \big\langle X~||~R \big\rangle.
$$
For example, knot (link) quandles have such presentations.
\vspace*{.6mm}

\begin{proposition}\label{rank-of-free-quandle}
Let $FQ(S)$ and $ FQ(T)$ be free quandles on sets $S$ and $T$, respectively. If $FQ(S)\cong FQ(T)$, then $|S|=|T|$.
\end{proposition}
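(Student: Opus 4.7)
The plan is to reduce the statement to the corresponding well-known fact for free groups via the enveloping group functor $Q \mapsto G_Q$ introduced just before the statement. The key fact to establish is that the enveloping group of a free quandle on a set $S$ is canonically isomorphic to the free group on $S$:
\[
G_{FQ(S)} \cong F(S).
\]
Once this identification is available, the argument is purely formal. The construction $Q \mapsto G_Q$ is functorial: a quandle homomorphism $\psi : Q_1 \to Q_2$ sends the relation $x \ast y = y^{-1} x y$ in $G_{Q_1}$ to the corresponding relation in $G_{Q_2}$ and hence extends to a group homomorphism, and isomorphisms go to isomorphisms. Thus $FQ(S) \cong FQ(T)$ yields $F(S) \cong G_{FQ(S)} \cong G_{FQ(T)} \cong F(T)$, and $|S| = |T|$ then follows from the classical rank invariance of free groups (pass to the abelianization $F(S)^{\mathrm{ab}} \cong \mathbb{Z}^{(S)}$ and invoke the well-definedness of rank for free abelian groups, finite or infinite).

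To prove $G_{FQ(S)} \cong F(S)$, I would argue using universal properties. The inclusion $S \hookrightarrow FQ(S)$ composed with the canonical map $FQ(S) \to G_{FQ(S)}$ is a set map $S \to G_{FQ(S)}$, which by the universal property of $F(S)$ extends to a group homomorphism $\alpha : F(S) \to G_{FQ(S)}$. Conversely, the embedding $\Phi : FQ(S) \hookrightarrow \Conj(F(S))$ from Proposition \ref{equivalence-two-models} is a quandle homomorphism into $\Conj(F(S))$, and by the defining universal property of the enveloping group it extends uniquely to a group homomorphism $\beta : G_{FQ(S)} \to F(S)$. On the generating set $S$ one verifies directly that $\alpha$ and $\beta$ are mutually inverse, whence $G_{FQ(S)} \cong F(S)$.

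The only step that is not purely formal is the identification $G_{FQ(S)} \cong F(S)$, and even there the substantive input is already supplied by Proposition \ref{equivalence-two-models}. I therefore expect no serious obstacle; the argument is in essence an exercise in transporting the rank invariant of free groups across the adjunction between $Q \mapsto G_Q$ and $H \mapsto \Conj(H)$.
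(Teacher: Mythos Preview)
Your proposal is correct and follows essentially the same route as the paper: pass to enveloping groups, identify $G_{FQ(S)}$ with the free group $F(S)$, and invoke rank invariance for free groups. The only difference is in justifying $G_{FQ(S)}\cong F(S)$: the paper cites Winker's presentation theorem (if $Q=\langle X\,\|\,R\rangle$ then $G_Q=\langle X\,\|\,\bar R\rangle$, so a free quandle on $S$ has enveloping group free on $S$), whereas you build mutually inverse maps directly from the universal properties, which is a self-contained alternative to citing Winker.
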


\begin{proof}
By \cite[p.106, Theorem 5.1.7]{Winker}, if $Q$ is a quandle with a presentation $Q=  \langle X~||~R \rangle$, then its enveloping group has presentation $G_Q \cong  \langle X~||~\bar{R} \rangle$, where $\bar{R}$ consists of relations in $R$ with each expression $x \ast y$ replaced by $y^{-1} x y$. Consequently, since $FQ(S)$ and $FQ(T)$ are free quandles, it follows that $G_{FQ(S)}=F(S)$ and $G_{FQ(T)}=F(T)$ are free groups on the sets $S$ and $T$, respectively. Since $FQ(S)\cong FQ(T)$, we must have $G_{FQ(S)} \cong G_{FQ(T)}$, and hence $|S|=|T|$.
\end{proof}
\vspace*{.6mm}

In view of Proposition \ref{rank-of-free-quandle}, we can define the {\it rank of a free quandle} as the cardinality of its any free generating set.
\vspace*{.6mm}

Analogous to groups, we define the {\it word problem} for quandles as the problem of determining whether two given elements of a quandle are the same. The word problem is solvable for finitely presented residually finite groups  \cite[p.55, 2.2.5]{Robinson}. Below is a similar result for quandles.
\vspace*{.6mm}

\begin{theorem}\label{word-problem}
Every finitely presented residually finite quandle has a solvable word problem.
\end{theorem}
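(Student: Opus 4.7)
The plan is to adapt McKinsey's classical argument for residually finite groups to the quandle setting. Let $Q = \langle X~||~R \rangle$ be a finitely presented residually finite quandle with finite generating set $X$ and finite set of defining relations $R$. Given two quandle words $w_1, w_2$ in the generators, we wish to decide algorithmically whether $w_1$ and $w_2$ represent the same element of $Q$. The idea is to run two semi-decision procedures in parallel, one of which halts precisely when $w_1 = w_2$ in $Q$ and the other of which halts precisely when $w_1 \neq w_2$ in $Q$.

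The first procedure systematically enumerates consequences of the three quandle axioms together with the finitely many defining relations in $R$, producing a recursively enumerable list of pairs $(u,v)$ with $u=v$ in $Q$. Since $Q$ is finitely presented, this enumeration is purely mechanical, and by the definition of a quandle presentation, $w_1 = w_2$ in $Q$ if and only if the pair $(w_1, w_2)$ eventually appears in the list. The second procedure enumerates all finite quandles $F$ (for instance by iterating over finite ground sets $\{1,\dots,k\}$ and all binary operations on them, checking the three quandle axioms) and, for each such $F$, enumerates all set maps $\phi_0 : X \to F$. For each pair $(F,\phi_0)$, one can effectively check whether $\phi_0$ extends to a quandle homomorphism $\phi : Q \to F$: it suffices to evaluate the finitely many relations from $R$ in $F$ under $\phi_0$, which is a finite computation. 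Whenever such an extension exists, compute $\phi(w_1)$ and $\phi(w_2)$ in $F$ and compare them; if they differ, halt and report $w_1 \neq w_2$ in $Q$.

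By construction, the first procedure halts precisely when $w_1 = w_2$ in $Q$, while the second procedure halts whenever some finite quandle $F$ and homomorphism $\phi : Q \to F$ separate $w_1$ from $w_2$. The residual finiteness of $Q$ guarantees that such an $(F,\phi)$ exists whenever $w_1 \neq w_2$ in $Q$. Hence exactly one of the two procedures terminates on every input, and its output correctly decides whether $w_1 = w_2$. The only subtlety I expect is making sure the enumeration in the second procedure is genuinely effective, but this reduces to checking the quandle axioms and finitely many defining relations inside a finite quandle, which is routine. This argument mirrors the standard proof for residually finite finitely presented groups cited in \cite{Robinson}.
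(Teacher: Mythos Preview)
Your proof is correct and follows essentially the same approach as the paper: both run McKinsey's two parallel semi-decision procedures, one enumerating consequences of the relations and the other enumerating finite quandles together with homomorphisms from $Q$. Your write-up is in fact slightly more explicit about the effectiveness of the second procedure (enumerating set maps $X\to F$ and checking the finitely many relations), whereas the paper simply notes that $\Hom(Q,F)$ is finite because $Q$ is finitely generated.
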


\begin{proof}
Let $Q=\langle  X~||~R\rangle $ be a  finitely presented residually finite quandle, and  $w_1,w_2$ two words in the generators $X$. We describe two procedures which tell us whether or not $w_1 =w_2$ in $Q$. The first procedure lists all the words  that we obtain by using  the relations of $Q$ on the word $w_1$. If the word $w_2$ turns up at some stage, then $w_1=w_2$, and we are done.
\par
The second procedure lists all the finite quandles. Since $Q$ is finitely generated, for each finite quandle $F$, the set $\Hom(Q,F)$ of all quandle homomorphisms is finite. Now for each homomorphism $\phi \in \Hom(Q,F)$, we look for $\phi(w_1)$ and $\phi(w_2)$ in $F$, and check whether or not $\phi(w_1)=\phi(w_2)$. Since $Q$ is  residually finite, the above procedure must stop at some time. That is, there exists a finite quandle $F$ and  $\phi \in \Hom(Q,F)$  such that  $\phi(w_1) \neq \phi(w_2)$ in $F$, and hence $w_1 \neq w_2$ in $Q$.
\end{proof}
\vspace*{.6mm}

\begin{remark}
In a recent work \cite{Belk-McGrail}, Belk and McGrail showed that the word problem for quandles is unsolvable in general by giving an example of a finitely presented quandle with unsolvable word problem. In view of Theorem \ref{word-problem}, such a quandle cannot be residually finite.
\end{remark}
\vspace*{.6mm}

\section{Residual finiteness of knot quandles}\label{sec-rf-knot-quandles}

In this section, we prove that the knot quandle of a tame knot is residually finite. We recall the following definition from \cite{Mal'cev}.
\vspace*{.6mm}

\begin{definition}
A subgroup $H$ of a group $G$ is said to be {\it finitely separable} in $G$ if for each $g \in G\setminus H$, there exists a finite group $F$ and a group homomorphism $\phi: G \to F$ such that $\phi(g) \not\in \phi(H)$.
\end{definition}
\vspace*{.6mm}

Let $H$ be a subgroup of a group $G$ and  $G/H$ the set of right cosets of $H$ in $G$. For $g \in G$, we denote its right coset by $\bar{g}$.  Let $z \in \C_G(H)$, the centraliser of $H$ in $G$, be a fixed element. Then it is easy to see that the set $G/H$ with the binary operation given by $$\bar{x} \ast \bar{y} = \bar{z}^{-1} \bar{x} \bar{y}^{-1} \bar{z} \bar{y}$$ for $\bar{x}, \bar{y} \in G/H$ forms a quandle, denoted $(G/H,z)$.
\vspace*{.6mm}

\begin{proposition}\label{res-finite-quandle}
Let $H$ be a subgroup of a group $G$ and $z\in  \C_G(H)$. If $H$ is finitely separable in $G$, then the quandle $(G/H,z)$ is residually finite.
\end{proposition}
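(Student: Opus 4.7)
The plan is to transport the finite separability of $H$ in $G$ into a separation of cosets by a quandle homomorphism into a finite quandle of the same type. Given two distinct elements $\bar{x} \neq \bar{y}$ of $G/H$, the inequality of right cosets translates into $xy^{-1} \notin H$. Finite separability then supplies a finite group $F$ and a group homomorphism $\phi \colon G \to F$ such that $\phi(xy^{-1}) \notin \phi(H)$.

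The natural candidate for the target quandle is $\bigl(F/\phi(H),\, \phi(z)\bigr)$, which is a finite quandle of the same type as $(G/H,z)$. To make sense of it I must first verify that $\phi(z) \in \C_F\bigl(\phi(H)\bigr)$; but this is automatic, since $z h = h z$ in $G$ for all $h \in H$ forces $\phi(z)\phi(h) = \phi(h)\phi(z)$. Next I define
$$
\psi \colon (G/H,\, z) \longrightarrow \bigl(F/\phi(H),\, \phi(z)\bigr), \qquad \psi(\bar{g}) \;=\; \overline{\phi(g)}.
$$
Well-definedness is immediate from $\phi(H) \subseteq \phi(H)$, and a direct check using the defining formula of the quandle operation, together with $\phi$ being a group homomorphism, shows
$$
\psi(\bar{x} \ast \bar{y}) \;=\; \overline{\phi(z)^{-1}\phi(x)\phi(y)^{-1}\phi(z)\phi(y)} \;=\; \psi(\bar{x}) \ast \psi(\bar{y}).
$$
Finally, $\psi(\bar{x}) = \psi(\bar{y})$ would mean $\phi(x)\phi(y)^{-1} \in \phi(H)$, contradicting the choice of $\phi$; hence $\psi$ separates $\bar{x}$ and $\bar{y}$.

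Since every pair of distinct cosets is separated by a quandle homomorphism to some finite quandle of the form $\bigl(F/\phi(H),\phi(z)\bigr)$, the quandle $(G/H,z)$ is residually finite. The only step that requires any care is the well-definedness of the quandle operation on right cosets and its compatibility with $\psi$; both rely on sliding elements of $H$ past $z$ (respectively of $\phi(H)$ past $\phi(z)$), which is precisely what the centralizer hypothesis provides. Everything else is a routine translation of the group-theoretic separation into the quandle setting.
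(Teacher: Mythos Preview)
Your argument is correct and follows essentially the same route as the paper's proof: both use finite separability of $H$ to produce a homomorphism $\phi\colon G\to F$ with $\phi(xy^{-1})\notin\phi(H)$, pass to the induced quandle map $(G/H,z)\to\bigl(F/\phi(H),\phi(z)\bigr)$, and verify it separates the given cosets. The paper states the well-definedness and homomorphism checks more tersely, but the content is the same; your explicit remark that $\phi(z)\in\C_F\bigl(\phi(H)\bigr)$ is a worthwhile clarification.
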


\begin{proof}
Let $\bar g_1,\bar g_2 \in G/H$ such that $\bar g_1 \neq \bar g_2$, that is, $g_{1} \neq hg_{2}$ for any $h \in H$. Since $H$ is finitely separable in $G$, there exists a finite group $F$ and a group homomorphism $\phi:G \rightarrow F$ such that $\phi(g_{1}) \neq \phi(hg_{2})$ for each $h \in H$. Let $\overline{H}:= \phi(H)$ and $\bar{z}:=\phi(z) \in \C_F(\overline{H})$. Then $(F/\overline{H},\bar{z})$ is a finite quandle. Further, the group homomorphism $\phi:G \rightarrow F$ induces a well-defined map $$\bar{\mathbb{\phi}}:(G/H,z) \rightarrow (F/\overline{H},\bar{z})$$ given by $$\bar{\mathbb{\phi}}( \bar x)=\overline{H}\phi(x),$$ which is a quandle homomorphism. Also, $\bar{\mathbb{\phi}}(\bar g_1) \neq \bar{\mathbb{\phi}}(\bar g_2)$, otherwise $\phi(g_{1})=\phi(hg_{2})$ for some $h\in H$, which is a contradiction. Hence the quandle $(G/H,z)$ is residually finite.
\end{proof}
\vspace*{.6mm}

\begin{definition}
A subquandle $Y$ of a quandle $X$ is said to be {\it finitely separable} in $X$ if for each $x \in X\setminus Y$, there exists a finite quandle $F$ and a quandle homomorphism $\phi: X \to F$ such that $\phi(x) \not\in \phi(Y)$.
\end{definition}
\vspace*{.6mm}

The following result might be of independent interest.
\vspace*{.6mm}

\begin{proposition}
Let $X$ be a residually finite quandle and $\alpha \in \Aut(X)$. If $\Fix(\alpha):= \{x \in X~|~ \alpha(x)=x\}$ is non-empty, then it is a finitely separable subquandle of $X$.
\end{proposition}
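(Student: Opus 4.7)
The plan is to first check that $\Fix(\alpha)$ actually is a subquandle (this is immediate: if $\alpha(x)=x$ and $\alpha(y)=y$, then $\alpha(x\ast y)=\alpha(x)\ast\alpha(y)=x\ast y$, and the corresponding check works for $\ast^{-1}$). So the content of the statement is the finite separability.

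Fix $x\in X\setminus\Fix(\alpha)$; then $\alpha(x)\neq x$. Since $X$ is residually finite, I can find a finite quandle $F$ and a quandle homomorphism $\phi:X\to F$ with $\phi(x)\neq\phi(\alpha(x))$. The key construction is then the ``graph'' map
\begin{equation*}
\psi:X\longrightarrow F\times F,\qquad \psi(y)=\bigl(\phi(y),\phi(\alpha(y))\bigr),
\end{equation*}
where $F\times F$ is the product quandle, which is finite. Because $\alpha$ is a quandle automorphism of $X$, the composition $\phi\circ\alpha$ is a quandle homomorphism, and so $\psi$ is a quandle homomorphism into a finite quandle (a routine one-line check using the product operation verifies this).

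Now observe that for any $y\in\Fix(\alpha)$ one has $\psi(y)=(\phi(y),\phi(y))$, so $\psi(\Fix(\alpha))$ is contained in the diagonal $\Delta_F=\{(f,f):f\in F\}$ of $F\times F$. However, $\psi(x)=(\phi(x),\phi(\alpha(x)))\notin\Delta_F$ by the choice of $\phi$, hence $\psi(x)\notin\psi(\Fix(\alpha))$. This is exactly what is required for $\Fix(\alpha)$ to be finitely separable in $X$.

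There is no real obstacle here; the only small point to get right is that $\phi\circ\alpha$ is a quandle homomorphism (so that $\psi$ lands inside the product quandle and respects $\ast$), which is where the hypothesis $\alpha\in\Aut(X)$ is used. The trick of replacing a single homomorphism separating $x$ from $\alpha(x)$ by the pair $(\phi,\phi\circ\alpha)$ to force the image of $\Fix(\alpha)$ onto the diagonal is the whole idea.
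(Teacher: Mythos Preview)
Your argument is correct and is essentially the paper's own proof: the paper also chooses $\phi$ separating $x$ from $\alpha(x)$ and then uses the map $y\mapsto(\phi(y),\phi(\alpha(y)))$ into $F\times F$. Your write-up is in fact a bit more careful, making the diagonal containment $\psi(\Fix(\alpha))\subseteq\Delta_F$ explicit.
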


\begin{proof}
Clearly $\Fix(\alpha)$ is a subquandle of $X$. Let $x_0 \in X \setminus \Fix(\alpha)$, that is, $\alpha(x_0) \neq x_0$. Since $X$ is residually finite, there exists a finite quandle $F$ and a quandle homomorphism $\phi: X \rightarrow F$ such that $\phi(\alpha(x_0)) \neq \phi(x_0)$. Define a map $\eta: X \rightarrow F\times F $ by $\eta(x)=\big(\phi(x), \phi(\alpha(x))\big)$. Clearly $\eta$ is a quandle homomorphism with $\eta(x_0) \not\in \eta(X)$, and hence $\Fix(\alpha)$ is finitely separable in $X$.
\end{proof}
\vspace*{.6mm}

Answering a question raised by Jaco \cite[V.22]{Jaco}, Long and Niblo \cite{Long-Niblo} proved the following result using the fact that doubling a 3-manifold along its boundary preserves residual finiteness.
\vspace*{.6mm}

\begin{theorem}\label{Long-Niblo-thm}
Suppose that $M$ is an orientable, irreducible compact 3-manifold and $X$ an incompressible connected subsurface of a component of $\partial(M)$. If $p\in X$ is a base point, then $\pi_1(X,p)$ is a finitely separable subgroup of $\pi_1(M,p)$.
\end{theorem}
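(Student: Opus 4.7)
The plan is to reduce the finite separability of $\pi_1(X,p)$ in $\pi_1(M,p)$ to a residual finiteness (in fact, subgroup separability) statement on a closed 3-manifold group, via the classical doubling construction. Doubling serves two purposes: the resulting closed manifold supports strong residual finiteness results for its fundamental group, and the natural involution exchanging the two halves realises $\pi_1(M,p)$ as a retract of the enlarged group, so that separating finite quotients of the enlarged group restrict to separating finite quotients of $\pi_1(M,p)$.

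First I would form the double $DM := M \cup_{\partial M} M$, obtained by gluing two copies of $M$ along their common boundary via the identity. The incompressibility of $X$ in its boundary component, combined with the loop theorem applied to any remaining compressible boundary components, allows a standard reduction to the case in which $\partial M$ is incompressible in $M$. Seifert--van Kampen then presents $\pi_1(DM,p) \cong \pi_1(M,p) *_{\pi_1(\partial M, p)} \pi_1(M,p)$, so that $\pi_1(M,p) \hookrightarrow \pi_1(DM,p)$, and the fold map $DM \to M$ (collapsing the second copy onto the first by the identity) induces a group-theoretic retraction $\rho : \pi_1(DM,p) \to \pi_1(M,p)$. The closed 3-manifold $DM$ is orientable, irreducible, and Haken, the embedded closed incompressible surface $\partial M \subset DM$ witnessing Hakenness.

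Next I would invoke Hempel's theorem that the fundamental group of a Haken 3-manifold is residually finite, together with the stronger subgroup separability results for finitely generated incompressible surface subgroups of Haken manifold groups. Viewed inside $\pi_1(DM,p)$, the subgroup $\pi_1(X,p)$ is exactly such a surface subgroup, being the fundamental group of an incompressible subsurface of the embedded closed incompressible surface $\partial M \subset DM$. Given $g \in \pi_1(M,p) \setminus \pi_1(X,p)$, regarded inside $\pi_1(DM,p)$, separability yields a finite group $F$ and a homomorphism $q : \pi_1(DM,p) \to F$ with $q(g) \notin q(\pi_1(X,p))$; restricting $q$ along the inclusion $\pi_1(M,p) \hookrightarrow \pi_1(DM,p)$ produces the required separating homomorphism of $\pi_1(M,p)$.

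The hard part will be the separability step: upgrading mere residual finiteness of $\pi_1(DM,p)$ to separability of the subsurface subgroup $\pi_1(X,p)$ inside it. This is strictly stronger than residual finiteness and requires LERF-type theorems for 3-manifold groups. The payoff of the doubling trick is precisely here, since $X$ is only a boundary subsurface of $M$, but in $DM$ it sits as an incompressible subsurface of an embedded closed incompressible surface, which is the natural setting in which Scott-type subgroup separability techniques for Haken manifold groups can be brought to bear.
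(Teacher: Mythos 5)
Note first that the paper does not actually prove this statement: it quotes it from Long and Niblo, recording only that their proof rests on doubling and on residual finiteness of the double. Your proposal has the right opening move (double, then restrict a separating finite quotient of the double back to $\pi_1(M,p)$), but the step you yourself flag as ``the hard part'' is a genuine gap, and it is the entire content of the theorem. You invoke ``subgroup separability results for finitely generated incompressible surface subgroups of Haken manifold groups.'' No such general theorem is available: Haken $3$-manifold groups need not be LERF (the paper itself remarks, citing Long--Niblo, that not all $3$-manifold groups are subgroup separable, and graph manifolds furnish explicit failures of separability for surface-type subgroups). Moreover, when $X$ is a proper subsurface, $\pi_1(X,p)$ is just a finitely generated free subgroup sitting inside the closed surface group $\pi_1(\partial M)\subset \pi_1(DM)$, so the separability you are assuming in $DM$ is at least as strong as the statement to be proved, merely transplanted into a closed manifold. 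Two further symptoms that the setup is off: the retraction $\rho$ you construct is never used in your final argument, and doubling along all of $\partial M$ is the wrong gluing locus for the trick that actually closes the gap.

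The Long--Niblo argument avoids LERF entirely. Double $M$ along $X$ itself, forming $D_X M = M \cup_X M$; incompressibility of $X$ ensures $D_X M$ is irreducible and Haken, so $\pi_1(D_X M,p) \cong \pi_1(M,p) *_{\pi_1(X,p)} \pi_1(M,p)$ is residually finite by Hempel's theorem (this is the ``doubling preserves residual finiteness'' hint in the paper). The involution $\sigma$ swapping the two halves fixes the edge group $\pi_1(X,p)$ pointwise and, by normal forms in the amalgam, moves every $g \in \pi_1(M,p)\setminus\pi_1(X,p)$ (an element of one vertex group equals its mirror image only if it lies in the amalgamated subgroup). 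Now apply the elementary fact that the fixed set of an automorphism of a residually finite group is finitely separable: choose a finite quotient $q\colon \pi_1(D_XM,p)\to F$ with $q(\sigma(g))\neq q(g)$, and observe that $h \mapsto \bigl(q(h), q(\sigma(h))\bigr)$ maps $\pi_1(X,p)$ into the diagonal of $F\times F$ and $g$ off it; restricting this homomorphism to $\pi_1(M,p)$ finishes the proof. (The paper proves precisely the quandle analogue of this fixed-subgroup lemma in Section 6, which is a strong hint at the intended mechanism.) Thus the only $3$-manifold input required is Hempel's residual finiteness of Haken manifold groups, not any subgroup separability theorem.
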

\vspace*{.6mm}

A group $G$ is said to be {\it subgroup separable} if every finitely generated subgroup of $G$ is finitely separable in $G$. It is well-known that not all 3-manifold groups are subgroup separable (see \cite{Long-Niblo}). We refer the reader to \cite{Scott} for relation between subgroup separability and geometric topology.
\vspace*{.6mm}

Let $V(K)$ be a tubular neighbourhood of a knot $K$ in $\mathbb{S}^3$. Then the knot complement $C(K):=\overline{\mathbb{S}^3 \setminus V(K)}$ has boundary $\partial C(K)$ a torus. Let $x_0 \in \partial C(K)$ a fixed base point. Then the inclusion
$$\iota:\partial C(K) \longrightarrow  C(K)$$ induces a group homomorphism $$\iota_*:\pi_1\big(\partial C(K),x_0\big) \longrightarrow \pi_1\big(C(K),x_0\big),$$ which is injective unless the knot $K$ is trivial \cite[p.41, Proposition 3.17]{Burde-Zieschang}. In fact, $\pi_1\big(\partial C(K),x_0\big) \cong \mathbb{Z} \oplus \mathbb{Z}$, and if $K$ is trivial, then $\pi_1\big(C(K),x_0\big) \cong \mathbb{Z}$. The group $P:= \iota_*\big(\pi_1(\partial C(K),x_0)\big)$ is called the {\it peripheral subgroup} of the knot group $\pi_1\big(C(K),x_0\big)$. Now, an immediate consequence of Theorem \ref{Long-Niblo-thm} is the following result.
\vspace*{.6mm}

\begin{corollary}\label{per-sep-group}
The peripheral subgroup of a non-trivial tame knot is finitely separable in the knot group.
\end{corollary}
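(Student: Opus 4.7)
The plan is to derive this corollary as a direct application of Theorem \ref{Long-Niblo-thm} (the Long--Niblo result) to the knot complement $M = C(K)$ with the subsurface $X = \partial C(K)$. So the entire task reduces to verifying that the hypotheses of that theorem are satisfied in this setting and then reading off the conclusion for the image subgroup $P$.

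First I would check that $M = C(K)$ satisfies the ambient hypotheses: $C(K)$ is a compact $3$-manifold (being the closure of the complement of an open tubular neighbourhood of $K$ in the compact manifold $\mathbb{S}^3$), and it is orientable since $\mathbb{S}^3$ is orientable and $C(K)$ is a codimension-zero submanifold. For irreducibility, I would appeal to the classical fact that the complement of a knot in $\mathbb{S}^3$ is irreducible: any embedded $2$-sphere in $C(K)$ bounds a ball in $\mathbb{S}^3$ (since $\mathbb{S}^3$ is irreducible), and one checks that this ball must lie in $C(K)$ using that $K$ is connected and is not contained in a ball disjoint from the sphere in a way that would violate irreducibility.

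Next I would verify the conditions on the subsurface. Since $V(K)$ is a tubular neighbourhood of a knot, $\partial C(K)$ is a single torus, so taking $X = \partial C(K)$ itself gives a connected subsurface of the (unique) boundary component. The key point is incompressibility: this is where the non-triviality of $K$ enters. For a trivial knot the complement is a solid torus and its boundary torus is compressible, whereas for a non-trivial tame knot the boundary torus is incompressible in $C(K)$, a standard consequence of the loop theorem together with the fact that a meridian disk would exhibit $K$ as the unknot. I would cite the standard reference (e.g.\ \cite{Burde-Zieschang}) for this fact rather than reproving it.

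With these hypotheses in place, Theorem \ref{Long-Niblo-thm} yields that the image $\iota_*\big(\pi_1(\partial C(K), x_0)\big)$ is finitely separable in $\pi_1(C(K), x_0)$; by definition this image is the peripheral subgroup $P$, so the corollary follows. There is no real obstacle: the only substantive input is the incompressibility of $\partial C(K)$ for non-trivial knots, and that is exactly what forces the hypothesis ``non-trivial'' in the statement of the corollary. The other conditions (connectedness, orientability, compactness, irreducibility) are automatic from the construction of $C(K)$.
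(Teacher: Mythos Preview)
Your proposal is correct and follows exactly the route the paper intends: the paper simply declares the corollary an ``immediate consequence'' of Theorem~\ref{Long-Niblo-thm}, and what you have written is precisely the verification of hypotheses (compactness, orientability, irreducibility of $C(K)$, and incompressibility of the boundary torus for non-trivial $K$) that this assertion presupposes. Your identification of the non-triviality hypothesis with incompressibility of $\partial C(K)$ is spot on and matches the paper's earlier remark (citing \cite{Burde-Zieschang}) that $\iota_*$ is injective exactly when $K$ is non-trivial.
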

\vspace*{.6mm}

We need the following result of Joyce  \cite[Section 4.9]{Joyce-thesis}, which follows by observing that the knot group $G$ of a tame knot $K$ acts transitively on its knot quandle $Q(K)$ with the stabiliser of an element of $Q(K)$ being isomorphic to the peripheral subgroup $P$.
\vspace*{.6mm}

\begin{proposition}\label{joyce-theorem}
Let $K$ be a tame knot with knot group $G$ and knot quandle $Q(K)$. Let $P$ be the peripheral subgroup of $G$ containing the meridian $m$. Then the knot quandle $Q(K)$ is isomorphic to the quandle $(G/P, m)$.
\end{proposition}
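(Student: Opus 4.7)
The plan is to realise $Q(K)$ as a transitive $G$-set and then promote the resulting bijection into a quandle isomorphism. Since $G$ is the enveloping group of $Q(K)$, there is a natural map $\eta: Q(K) \to \Conj(G)$, and for knot quandles it is known (via the Wirtinger presentation of $Q(K)$ and $G$) that $\eta$ is injective with image equal to the conjugacy class of the meridian $m$ in $G$. In particular, $Q(K)$ embeds as a subquandle of $\Conj(G)$, on which $G$ acts by conjugation.

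This conjugation action is transitive because any two meridians of a knot are conjugate in the knot group, a fact already visible from the Wirtinger presentation. The stabiliser of $m$ under the action is the centraliser $\C_G(m)$, and a classical result about knot groups identifies $\C_G(m)$ with the peripheral subgroup $P$ for every tame knot (trivially for the unknot, where $G = P \cong \mathbb{Z}$, and by 3-manifold topology in the non-trivial case). The orbit-stabiliser theorem then yields a $G$-equivariant bijection
$$
\Phi: G/P \longrightarrow \eta(Q(K)), \qquad Pg \longmapsto g^{-1} m g.
$$
Because $m \in P \subseteq \C_G(P)$, the quandle $(G/P, m)$ is defined as in Proposition \ref{res-finite-quandle}, and $\Phi$ is well-defined on cosets.

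It remains to check that $\Phi$ intertwines the quandle operations. A short direct calculation shows that both $\Phi\big(\overline{m^{-1} x y^{-1} m y}\big)$ and $(x^{-1} m x) \ast (y^{-1} m y)$, computed in $\Conj(G)$, equal $y^{-1} m^{-1} y x^{-1} m x y^{-1} m y$. Hence $\Phi$ is a quandle homomorphism, and therefore a quandle isomorphism. Composing with $\eta^{-1}$ produces the desired isomorphism $Q(K) \cong (G/P, m)$.

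The main obstacle is the topological ingredient identifying $\C_G(m)$ with $P$; this is the only non-algebraic step and the reason the argument is confined to \emph{tame} knots. Once this identification is granted, the remainder is a routine orbit-stabiliser argument together with the direct verification of the quandle operation sketched above.
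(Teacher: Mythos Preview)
Your argument is correct and matches the paper's own treatment: the paper does not give a full proof but simply cites Joyce's thesis together with the one-line observation that $G$ acts transitively on $Q(K)$ with stabiliser the peripheral subgroup $P$, which is precisely the orbit--stabiliser strategy you carry out in detail. Your extra step of embedding $Q(K)$ into $\Conj(G)$ and identifying the stabiliser as $\C_G(m)=P$ is a legitimate way to make that observation precise; the identification $\C_G(m)=P$ is indeed the topological input, and you are right to flag it as the only non-formal ingredient.
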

\vspace*{.6mm}

We now have our main result.
\vspace*{.6mm}

\begin{theorem}\label{knot-quandle-rf}
The knot quandle of a tame knot is residually finite.
\end{theorem}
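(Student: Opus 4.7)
The proof is essentially an assembly of the tools already developed. My plan is to split on whether the knot is trivial or not, and then apply Proposition \ref{joyce-theorem}, Corollary \ref{per-sep-group}, and Proposition \ref{res-finite-quandle} in succession.

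First, I would dispose of the unknot as a separate easy case. For a trivial tame knot $K$, the knot complement $C(K)$ is a solid torus, so $\pi_1(C(K),x_0)\cong\mathbb{Z}$. Under the identification of Proposition \ref{joyce-theorem} (or directly from the Wirtinger presentation), $Q(K)$ is a trivial quandle, and hence residually finite by Proposition \ref{trivial-res-finite}.

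Now assume $K$ is a non-trivial tame knot with knot group $G=\pi_1(C(K),x_0)$ and peripheral subgroup $P$ containing the meridian $m$. By Proposition \ref{joyce-theorem}, it suffices to show that the quandle $(G/P,m)$ is residually finite. To apply Proposition \ref{res-finite-quandle} we must check that (i) $m\in\C_G(P)$ and (ii) $P$ is finitely separable in $G$. For (i), note that $P\cong \mathbb{Z}\oplus\mathbb{Z}$ is abelian and $m\in P$, so $m$ commutes with every element of $P$, i.e.\ $m\in\C_G(P)$. For (ii), we invoke Corollary \ref{per-sep-group}, which is a direct consequence of the Long--Niblo theorem (Theorem \ref{Long-Niblo-thm}) applied to the orientable irreducible compact $3$-manifold $M=C(K)$ with $X=\partial C(K)$ an incompressible subsurface of the boundary. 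Having verified both hypotheses, Proposition \ref{res-finite-quandle} yields that $(G/P,m)$ is residually finite, and so $Q(K)$ is residually finite via the isomorphism in Proposition \ref{joyce-theorem}.

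There is no real obstacle remaining at this stage: all the substantive input, namely Long and Niblo's theorem on finite separability of boundary subgroups of orientable irreducible $3$-manifolds and Joyce's description of the knot quandle as the coset quandle $(G/P,m)$, has been recorded in the excerpt. The only minor points to be careful about are the centraliser condition $m\in\C_G(P)$, which is immediate from abelianness of $P$, and the trivial-knot case, where the map $\iota_*$ fails to be injective and must be handled separately. As a by-product, combining Theorem \ref{hopfian-thm} with the fact that $Q(K)$ is finitely generated (from a diagram of $K$) yields the corollary that knot quandles of tame knots are Hopfian.
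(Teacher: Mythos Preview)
Your proof is correct and follows the same route as the paper: handle the unknot separately as a trivial quandle, and for a non-trivial knot combine Joyce's identification $Q(K)\cong(G/P,m)$, Corollary~\ref{per-sep-group}, and Proposition~\ref{res-finite-quandle}. You add the explicit verification that $m\in\C_G(P)$ (immediate since $P$ is abelian), which the paper leaves implicit, but otherwise the arguments coincide.
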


\begin{proof}
Let $K$ be a tame knot. If $K$ is an unknot, then the knot quandle $Q(K)$ is vacuously residually finite being a trivial quandle with one element. If $K$ is non-trivial, then using Proposition \ref{joyce-theorem}, Corollary \ref{per-sep-group} and Proposition \ref{res-finite-quandle} it follows that $Q(K)$ is residually finite.
\end{proof}
\vspace*{.6mm}
As a consequence of Theorem \ref{word-problem} and \ref{knot-quandle-rf}, it follows that the word problem is solvable in knot quandles of tame knots.
\vspace*{.6mm}

Theorems \ref{hopfian-thm} and \ref{knot-quandle-rf} yield the following.
\vspace*{.6mm}

\begin{corollary}
The knot quandle of a tame knot is Hopfian.
\end{corollary}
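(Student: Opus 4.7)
The plan is to deduce this corollary directly by combining the two main theorems just proved, together with the standard fact that the knot quandle of a tame knot is finitely generated. First I would recall that for any tame knot $K$, one can choose a regular diagram $D(K)$ with finitely many arcs $a_1, \ldots, a_n$, and that by the Wirtinger-type presentation of the knot quandle one has
\[
Q(K) = \big\langle a_1, \ldots, a_n~||~R \big\rangle,
\]
where $R$ is a finite set of crossing relations of the form $a_i \ast a_j = a_k$. In particular $Q(K)$ is finitely generated (indeed, finitely presented).

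Next I would invoke Theorem \ref{knot-quandle-rf}, which tells us that $Q(K)$ is residually finite. With finite generation in hand, Theorem \ref{hopfian-thm} applies and yields that every surjective quandle endomorphism $\phi : Q(K) \to Q(K)$ is injective, which is exactly the Hopfian property.

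The only substantive point to be careful about is the finite generation of $Q(K)$, since Theorem \ref{hopfian-thm} strictly requires this hypothesis (as highlighted by the remark about $FQ_\infty$). There is no genuine obstacle here, because tame knots admit finite diagrams, but this is the step that needs to be made explicit in the one-line deduction; everything else is a direct citation of the preceding results.
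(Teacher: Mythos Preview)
Your proposal is correct and matches the paper's own argument: the corollary is deduced immediately from Theorems \ref{hopfian-thm} and \ref{knot-quandle-rf}, with finite generation of $Q(K)$ (via the Wirtinger-type presentation) being the only additional ingredient. You have made explicit the finite-generation step that the paper leaves implicit, which is appropriate.
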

\vspace*{.6mm}

An immediate consequence of Theorem \ref{knot-quandle-rf} is the following

\begin{corollary}
Let $K$ be a non-trivial tame knot. Then there exists a finite quandle $X$ such that $\Hom\big(Q(K), X\big)$ has a non-constant homomorphism.
\end{corollary}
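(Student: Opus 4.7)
The plan is to derive this corollary as an essentially immediate consequence of Theorem \ref{knot-quandle-rf} once I verify that the knot quandle of a non-trivial tame knot contains at least two distinct elements. So the first step is to observe that $Q(K)$ is non-trivial as a set. By Proposition \ref{joyce-theorem}, $Q(K) \cong (G/P, m)$, where $G = \pi_1(C(K),x_0)$ is the knot group and $P$ is the peripheral subgroup. Since $K$ is non-trivial, $G$ is non-abelian while $P \cong \mathbb{Z} \oplus \mathbb{Z}$ is abelian, so $P$ is a proper subgroup of $G$, and therefore $G/P$ has at least two distinct cosets.

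Next, pick any two elements $x, y \in Q(K)$ with $x \neq y$. By Theorem \ref{knot-quandle-rf}, $Q(K)$ is residually finite, so there exist a finite quandle $X$ and a quandle homomorphism $\phi \colon Q(K) \to X$ with $\phi(x) \neq \phi(y)$. Because the image of $\phi$ contains at least two distinct elements, $\phi$ cannot be constant, and hence $\phi \in \Hom(Q(K), X)$ is the desired non-constant homomorphism.

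There is no genuine obstacle here: the only mild point to justify is that non-triviality of $K$ forces $|Q(K)| \ge 2$, which I would handle in one sentence using the peripheral subgroup description above (alternatively, one could appeal to the fact that $Q(K)$ determines the knot type up to homeomorphism of $\mathbb{S}^3$, so a one-element quandle could only arise from the unknot). The remainder is a direct invocation of residual finiteness.
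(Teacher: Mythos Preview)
Your proposal is correct and matches the paper's approach: the paper states this corollary without proof, merely labeling it ``an immediate consequence of Theorem~\ref{knot-quandle-rf}''. Your argument supplies exactly the obvious missing detail---that $|Q(K)|\ge 2$ for a non-trivial knot---and then applies residual finiteness directly, which is precisely what the paper has in mind.
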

\vspace*{.6mm}

\begin{remark}
Joyce \cite[pp. 47--48]{Joyce} used a result of Waldhausen \cite{Waldhausen} to prove that quandles associated to tame knots are complete invariants up to orientation. To use \cite{Waldhausen} the knot complements are required to be irreducible 3-manifolds. But, this is not always true for tame links since there are tame links whose complements in $\mathbb{S}^3$ are reducible 3-manifolds. Thus, quandles associated to tame links are not complete invariants. For the same reason, Theorem \ref{Long-Niblo-thm} is not applicable, and hence we are not able to prove an analogue of Theorem \ref{knot-quandle-rf} for tame links with more than one component.
\end{remark}
\vspace*{.6mm}

\begin{problem}
We conclude with the following problems which might shed more light on the ideas pursued in this paper.
\begin{enumerate}
\item Let $L$ be a tame link with more than one component. Is it true that the link quandle $Q(L)$ is residually finite? We note that if $L_n$ is a trivial $n$-component link, then the link quandle $Q(L_n)$ is isomorphic to the free quandle on $n$ generators, and hence is residually finite by Theorem \ref{free-quandle-rf}.
\item Is it true that any subquandle of a free quandle is free?
\end{enumerate}
\end{problem}
\vspace*{.6mm}

\begin{ack}
The authors thank the anonymous referee for useful comments and for the references \cite{Fenn-Rourke,  Kamada2012, Kamada2017, Nosaka}. Bardakov acknowledges support from the Russian Science Foundation project N 16-41-02006. Mahender Singh acknowledges support from INT/RUS/RSF/P-02 grant and SERB Matrics Grant. Manpreet Singh thanks IISER Mohali for the PhD Research Fellowship.
\end{ack}

\end{document}